\documentclass[12pt]{article}
\usepackage[T1]{fontenc}
\usepackage{amsmath, amsthm, amssymb}
\usepackage[ansinew]{inputenc}
\usepackage{amsfonts}
\usepackage{amssymb}
\usepackage{xcolor}
\usepackage{enumitem}% http://ctan.org/pkg/enumitem
\usepackage{amsthm}% http://ctan.org/pkg/amsthm
\usepackage{dsfont}
\usepackage{tikz-cd}
\usetikzlibrary{arrows.meta, positioning}
\DeclareMathAlphabet{\mathbbmsl}{U}{bbm}{m}{sl}

\def\min{\mathop{\rm min}}

\newtheorem{theorem}{Theorem}[section]

\newtheorem{corollary}[theorem]{Corollary}
\newtheorem{example}[theorem]{Example}

\newtheorem{remark}[theorem]{Remark}
\newtheorem{lemma}[theorem]{Lemma}
\newtheorem{definition}[theorem]{Definition}
\newtheorem{proposition}[theorem]{Proposition}
\usepackage[numbers,sort&compress]{natbib}
\usepackage{float}
\usepackage[colorlinks=true,
            linkcolor=blue,
            citecolor=blue,
            urlcolor=blue]{hyperref}
\begin{document}
\title{\large \textbf{On the Convexity of the Solution Set of  Linear Complementarity Problem over Tensor Spaces}}

\author{Sonali Sharma~\!\href{https://orcid.org/0000-0003-0862-9711}{%
\raisebox{-0.2ex}{\includegraphics[width=0.75em]{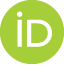}}}$^{a,1}$, V. Vetrivel$^{a,2},$ Jein-Shan Chen$^{b,3}$ \\
{\small$^{a}$Department of Mathematics, Indian Institute of Technology Madras, Chennai, India}\\
{\small$^{b}$Department of Mathematics, National Taiwan Normal University, Taipei, Taiwan}\\
{\small $^{1}$Email: ma24r005@smail.iitm.ac.in, ssonali836@gmail.com}\\
{\small $^{2}$Email: vetri@iitm.ac.in}\\
{\small $^{3}$Email: jschen@math.ntnu.edu.tw}\\
}
\date{}

\maketitle

\begin{abstract}
This paper investigates the convexity of the solution set of the linear complementarity problems over tensor spaces (TLCPs). We introduce the notion of a $T$-column sufficient tensor and study its properties and relationships with several structured tensors. An equivalent condition for the convexity of the solution set of the $\mathrm{TLCP}$ is established. In addition, sufficient conditions for uniqueness and for feasibility implying solvability are derived.
\end{abstract}
\textbf{{Keywords:}} Linear complementarity problem, tensor space, convexity, $T$-column sufficient, uniqueness.\\
\textbf{Mathematics subject classification:} 90C20, 90C26, 90C33. 

\section{Introduction}
Given a matrix $\mathbf{M} \in \mathbb{R}^{n \times n}$ and a vector $\mathbf{q} \in \mathbb{R}^{n}$, the classical linear complementarity problem (LCP), denoted by $\mathrm{LCP}(\mathbf{M},\mathbf{q})$ is to find a vector $\mathbf{z} \in \mathbb{R}^{n}$ such that
\begin{equation*}
 \mathbf{z} \geq \mathbf{0},~ \mathbf{M}\mathbf{z}+\mathbf{q} \geq \mathbf{0},~\text{and}~\mathbf{z}^{T}(\mathbf{M}\mathbf{z}+\mathbf{q}) =0,   
\end{equation*}
where the inequality is understood component-wise. The linear complementarity problem has been extensively studied due to its important applications in optimization, engineering, and economic equilibrium models. Many properties of the solution set of $\mathrm{LCP}$ depend strongly on the structural properties of the matrix (see \cite{MR3396730}).

Recently, Song and Qi \cite{MR3341670} introduced the tensor complementarity problem (TCP), which extends the classical complementarity framework to higher-order tensors. For a tensor $\mathcal{M} \in \mathbb{R}^{[m,n]}$  and a vector $\mathbf{q} \in \mathbb{R}^{n}$, the TCP consists of finding $\mathbf{z} \in \mathbb{R}^{n}$ such that 
\begin{equation*}
  \mathbf{z} \geq \mathbf{0},~\mathcal{M}\mathbf{z}^{m-1}+\mathbf{q} \geq \mathbf{0},~\text{and}~\mathbf{z}^{T}(\mathcal{M}\mathbf{z}^{m-1}+\mathbf{q})=0,  
\end{equation*}
where $\mathcal{M}\mathbf{z}^{m-1}$  is a homogeneous polynomial of degree $m-1$ and its $i$-th component is given as
$$(\mathcal{M}{\bf z}^{m-1})_{i} = \displaystyle{\sum_{i_{2},...,i_{m} =1}^n}{m_{ii_{2}...i_{m}}z_{i_{2}}...z_{i_{m}}},$$
with $\mathcal{M} = (m_{i_{1}i_{2}...i_{m}}),$ for all $i_{j} \in [n]~(j \in [m]).$ When $m=2$, the TCP reduces to the  linear complementarity problem. 

Since the introduction of the TCP, it has been widely investigated, and several classes of structured tensors have been introduced to study the existence, finiteness, boundedness, and uniqueness of solutions. For instance, nondegenerate tensors are related to the finiteness of the solution set \cite{MR4310678},  (strictly) semi-positive and column sufficient tensors are closely associated with uniqueness properties \cite{MR3501398, MR3778365}. Furthermore, $P$-tensors and strong $P$-tensors \cite{MR3513266} play an important role in guaranteeing the compactness and uniqueness of solutions. For a brief overview of the theory, solution methods, and applications of the TCP, we refer the reader to \cite{MR3989294,MR4023437,MR3998357} and references therein. Many generalizations of the TCP and the properties of their solution set with applications have also been investigated in the literature (see \cite{LI2026116873,li2025extended,sharma2025extended,shang2023structured,li2024strict} and references therein).

More recently, Li and Huang \cite{Li2020} introduced the linear complementarity problem over tensor spaces (TLCP) by using the contraction product of tensors. Let $\mathcal{M}$ be in  $\mathbb{R}^{[2m,n]}$ and $\mathcal{Q} \in \mathbb{R}^{[m,n]}$. The TLCP, denoted by $\mathrm{TLCP}(\mathcal{M},\mathcal{Q})$,  is to find a tensor $\mathcal{Z} \in \mathbb{R}^{[m,n]}$
such that
\begin{equation*}\label{TLCP111}
  \mathcal{Z} \geq \mathcal{O},~\mathcal{M}\mathcal{Z}+\mathcal{Q} \geq \mathcal{O},~\text{and}~\langle \mathcal{Z}, \mathcal{M}\mathcal{Z}+\mathcal{Q} \rangle = 0, 
\end{equation*}
where $\mathcal{M}\mathcal{Z}$ is a tensor contraction product defined in \cite{Li2020}, and $\mathcal{O}$ denotes a tensor having all of its entries equal to zero. When $m =1$, the TLCP reduces to the classical linear complementarity problem. The introduction of the TLCP provides a useful framework for extending complementarity theory to tensor spaces and has stimulated further research in this direction. The authors in \cite{SHANG2024115383} introduced the notions of $T$-$R_{0}$ and $T$-$R$ tensors and investigated bounds for the solution set of the TLCP. Later, Shang and Jia \cite{Shang19062025} introduced several structured tensors over tensor spaces, including $T$-$S$ tensors, $T$-semi-positive ($T$-strictly semi-positive) tensors, $T$-copositive ($T$-strictly copositive) tensors and $T$-$P$ tensors and studied the global uniqueness and solvability of the TLCP under these structured tensors. Moreover, it was shown that 
the TLCP can be associated with the following quadratic programming problem:
\begin{equation*}\label{QPP112}
\begin{aligned}
\min \quad & \mathcal{Z}(\mathcal{M}\mathcal{Z}) + \mathcal{Q}\mathcal{Z} \\
\text{s.t.} \quad & \mathcal{M}\mathcal{Z} + \mathcal{Q} \geq \mathcal{O}, 
& \mathcal{Z} \geq \mathcal{O}.
\end{aligned}
\end{equation*} 
It was proved that the feasible set of this quadratic program coincides with that of the TLCP, and any minimizer of the program provides a solution to the TLCP. This equivalence also indicates that the TLCP is closely related to certain quadratic programming problem, thereby highlighting their potential applications in global optimization.

Despite significant advances in the theory of the $\mathrm{TLCP}$, the convexity of the solution set of the $\mathrm{TLCP}$ remains largely unexplored. Motivated by this gap, we study the convexity of the solution set of the $\mathrm{TLCP}$. We introduce the notion of a $T$-column sufficient tensor and study its connection with the structure of the solution set of the  $\mathrm{TLCP}$. Although Li and Huang \cite{Li2020} provided a sufficient condition, it does not fully characterize convexity. In contrast, we establish an equivalent condition. We also investigate the uniqueness of solutions to the $\mathrm{TLCP}$ associated with $T$-column sufficient tensors. The main contributions of this paper are summarized as follows:
\begin{itemize}
\item[\rm(i)]  We introduce the notion of a $T$-column sufficient tensor and study its properties and relationships with $T$-$P$ tensors, $T$-positive semidefinite tensors, and $T$-copositive tensors.
\item[\rm(ii)] We provide an equivalent condition for a $T$-$P$ tensor to be a $T$-column sufficient tensor.
\item[\rm(iii)] We prove that the solution set of the TLCP is convex if and only if the involved tensor is $T$-column sufficient.
\item[\rm(iv)] We provide sufficient conditions for the feasibility implying solvability, and for the uniqueness of the solution to the TLCP.
\end{itemize} 
To provide a clear overview, Figure~\ref{fig} illustrates the relationships among several structured tensors and highlights the contributions of the present work, with the red arrows indicating the results established in this paper.
\begin{figure}[!htbp]
\centering
\begin{tikzpicture}[
    every node/.style={font=\small},
    box/.style={draw, rectangle, rounded corners, minimum width=2.5cm, minimum height=1cm, align=center},
    comb/.style={draw, dashed, rectangle, rounded corners, minimum width=3.2cm, minimum height=1.4cm, align=center},
    arrow/.style={->, thick}
]

\node[box] (PSD)  at (0,0) {$T$-positive\\ semidefinite};
\node[box] (CS) at (4,0) {$T$-column \\sufficient};
\node[box] (ND) at (8,1.5) {$T$-non-\\degenerate};
\node[box] (P)  at (4,2.2) {$T$-$P$};
\node[box] (CCS)  at (4.5,-1.8) {$T$-column sufficient\\ on $\mathbb{R}^{[m,n]}_{+}$};
\node[box] (COP)  at (8,-1.8) {$T$-copositive};
\node[box] (SP) at (4,-3.8) {$T$-semi-positive};

\node[comb] (COMB) at (8,3.1) 
{$T$-column sufficient \\\;+\\\; $T$-non-degenerate};
\node[box] (convexity) at (0,-2.5) {Convexity of \\
${\mathrm{SOL}(\mathcal{M},\mathcal{Q})}$};

\draw[<->,thick, red] (convexity) to (CS);
\draw[arrow, red] (PSD) -- (CS);
\draw[arrow,red] (CS) -- (CCS);
\draw[arrow,red] (COP) -- (CCS);
\draw[arrow,red] (P) to (CS);
\draw[arrow,red] (P) to  (ND);
\draw[arrow,red] (CCS) to (SP);
\draw[<->, thick,red] (P) -- (COMB);
\draw[->, thick] (COP) to [bend left =25]
    node[midway, below=1pt]  {\scriptsize \cite{Shang19062025}} 
(SP);
\draw[->, thick] (PSD) to 
    node[midway, right]{\scriptsize \cite{Li2020}} 
(convexity);
\end{tikzpicture}

\caption{Illustration of the results obtained for a $T$-column sufficient tensor.}\label{fig}
\end{figure}
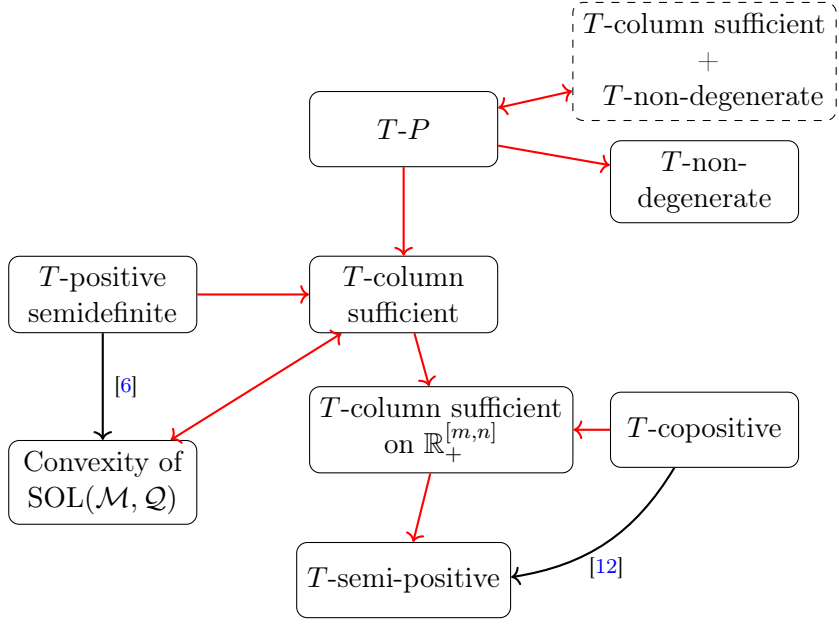

The organization of the paper is as follows. In Section \ref{Pre}, we recall the necessary concepts and results used in the sequel. In Section \ref{SectionT-CS}, we introduce $T$-column sufficient tensors and study their properties and relationships with other structured tensors. In Section \ref{Properties}, we investigate the properties of the solution set of the $\mathrm{TLCP}$, such as convexity in Subsection \ref{Convexity}, feasibility implies solvability in Subsection \ref{FEASOL}, and uniqueness in Subsection \ref{Uniqueness}. Conclusions are given in Section \ref{conclusions}.

\section{Preliminaries}\label{Pre}
In this section, we recall some basic concepts and results that will be used throughout the paper. 
\begin{enumerate}
\item A tensor of appropriate order and dimension with all its entries equal to zero is denoted by $\mathcal{O}.$
\item The set $\{1,2,...,n\}$ is denoted by $[n].$
\item Let $m\geq 2$ and $k_1,k_2,\ldots,k_m$ be positive integers. A real tensor $\mathcal{M}=(m_{i_1\ldots i_m})$ is called an $m$-th order $(k_1\times \cdots \times k_m)$-dimensional tensor, where 
$m_{i_1\ldots i_m}\in\mathbb{R}$ and $i_j\in [k_j]$ 
for $j\in[m]$.
\item The set of all $m$-th order $(k_1\times\cdots\times k_m)$-dimensional real tensors is denoted by $\mathbb{R}^{k_1\times\cdots\times k_m}$.
\item If $k_1=\cdots=k_m=n$, then $\mathcal{M}$ is called an 
$m$-th order $n$-dimensional tensor, and the set of all such tensors is denoted by $\mathbb{R}^{[m,n]}$. 
\item A tensor $\mathcal{M} \in \mathbb{R}^{[m,n]}$ is said to be \emph{nonnegative} (respectively, \emph{positive}) if all of its entries are nonnegative (respectively, positive). In this case, we write $\mathcal{M} \geq \mathcal{O}$ (respectively, $\mathcal{M} > \mathcal{O}$). We denote the sets of all nonnegative and positive tensors in $\mathbb{R}^{[m,n]}$ as $\mathbb{R}^{[m,n]}_+$ and $\mathbb{R}^{[m,n]}_{++}$, respectively.
\item Let $\mathcal{M}, \mathcal{N}$ be in $\mathbb{R}^{k_1\times\cdots\times k_m}$. Then for any $i_{j} \in [k_{j}] (j \in [m])$, the tensors 
$\max\{\mathcal{M},\mathcal{N}\},\mathcal{M}^{+}, \mathcal{M}^{-}$ are in $\mathbb{R}^{k_1\times\cdots\times k_m}$ defined as
\begin{enumerate}
\iffalse
\item $(\min\{\mathcal{M},\mathcal{N}\})_{i_{1}i_{2}...i_{m}} := \min\{m_{i_{1}i_{2}...i_{m}}, n_{i_{1}i_{2}...i_{m}}\},$
\fi
\item $(\max\{\mathcal{M},\mathcal{N}\})_{i_{1}i_{2}...i_{m}} := \max\{m_{i_{1}i_{2}...i_{m}}, n_{i_{1}i_{2}...i_{m}}\},$
    \item $\mathcal{M}^{+} = \max\{\mathcal{M},\mathcal{O}\},$
    \item $\mathcal{M}^{-} = \max\{-\mathcal{M},\mathcal{O}\}.$
\end{enumerate}
In accordance with standard convention, we denote $- \mathcal{M} = (-1)\mathcal{M}.$ It is easy to see that $(-\mathcal{M})^{-} = \mathcal{M}^{+}.$
\item A tensor $\mathcal{M} = (m_{i_{1}i_{2}...i_{2m}}) \in \mathbb{R}^{[2m,n]}$ is said to be a block symmetric tensor (see \cite{Li2020}) if $m_{i_{1}...i_{m}i_{m+1}...i_{2m}} = m_{i_{m+1}...i_{2m}i_{1}...i_{m}}$ for any $i_{j} \in [n]~(j \in [2m]).$
\iffalse
{\item A tensor $\mathcal{M} = (m_{i_{1}i_{2}...i_{2m}}) \in \mathbb{R}^{[2m,n]}$ is said to be a block diagonal tensor (see \cite{Li2020}) if $m_{i_{1}...i_{m}i_{m+1}...i_{2m}}=0$ for all $(i_{1},i_{2},...,i_{m}) \neq (i_{m+1},i_{m+2},...,i_{2m}).$
$\mathcal{M}$ can have nonzero entries only when $(i_{1},i_{2},...,i_{m}) = (i_{m+1},i_{m+2},...,i_{2m}).$}
\fi
\item Let $\mathcal{M}, \mathcal{N}$ be  in $\mathbb{R}^{k_1\times\cdots\times k_m}$ and $\beta \in \mathbb{R}$. Then for any $i_{j} \in [k_{j}] (j \in [m])$, we define $\beta \mathcal{M}$, $\mathcal{M}+\mathcal{N}$ and the inner product $\langle \mathcal{M}, \mathcal{N} \rangle$ as
$\beta \mathcal{M}:= (\beta m_{i_{1}i_{2}...i_{m}}),$
$\mathcal{M} + \mathcal{N} := (m_{i_{1}i_{2}...i_{m}}+n_{i_{1}i_{2}...i_{m}}),$ and
\begin{equation}\label{innerproduct}
   \langle \mathcal{M}, \mathcal{N} \rangle := 
\displaystyle{}{}\sum_{i_1=1}^{k_1}\cdots\sum_{i_m=1}^{k_m}
m_{i_1\ldots i_m}
n_{i_{1}\ldots i_m}.
\end{equation}
Therefore, $\mathbb{R}^{k_1\times\cdots\times k_m}$ becomes a real Hilbert space, and we call it a tensor space due to its elements being tensors. The inner product induces a norm of a tensor $\mathcal{M},$ which is
\begin{equation*}
    \|\mathcal{M}\| = \sqrt{\langle \mathcal{M},\mathcal{M}\rangle} = \sqrt{\displaystyle{}{}\sum_{i_1=1}^{k_1}\cdots\sum_{i_m=1}^{k_m}
m^{2}_{i_1\ldots i_m}}.
\end{equation*}
\end{enumerate}
In the following, we recall the concept of $J(p)$-mode product of tensors from \cite{Shang19062025}, which was originally introduced in \cite{Li2020}.

\begin{definition}\label{contraction}\rm \citep[Definition 2.1]
{Shang19062025}
Let $\mathcal{M}$ be a tensor in $\mathbb{R}^{k_{1}\times\cdots \times k_{m}}$, and $\mathcal{N}$ be a tensor in $\mathbb{R}^{{\bar{k}_{1}}\times \cdots \times {\bar{k}_{p}}},$ where $p \leq m$. Suppose that there exists $p$ monotonically increasing different integers $\hat{j}_{1},\ldots,\hat{j}_{p} \in [m]$ such that $k_{\hat{j}_{l}}=\bar{k}_{l}$ for any $l \in [p]$. Let $J(p):=\{\hat{j}_{1},\ldots,\hat{j}_{p}\}$ and $I(p):= \{\hat{i}_{1},\ldots,\hat{i}_{m-p}\}$ be a partition of the set $[m]$ such that $J(p) \cap I(p) = \emptyset$ and $J(p) \cup I(p) = [m],$ where the elements in $I(p)$ are monotonically increasing. For any $i_{\hat{i}_{1}} \in [k_{\hat{i}_{1}}],\ldots, i_{\hat{i}_{m-p}}\in [k_{\hat{i}_{m-p}}]$, the $J(p)$-mode product of tensors $\mathcal{M}$  and $\mathcal{N}$  is denoted by $\mathcal{M}_{\times J(p)} \mathcal{N},$ which is a tensor in $\mathbb{R}^{k_{\hat{i}_{1}} \times \ldots \times k_{\hat{i}_{m-p}}}$ with 
\begin{equation*}
(\mathcal{M}\times_{J(p)}\mathcal{N})_{i_{\hat{i}_{1}}\ldots i_{\hat{i}_{m-p}}} =\sum_{i_{\hat{j}_{1}}=1}^{\bar{k}_{1}}
\cdots\sum_{i_{\hat{j}_{p}}=1}^{\bar{k}_{p}}
m_{sort}\{{i_{\hat{i}_1},\ldots,i_{\hat{i}_{m-p}},i_{\hat{j}_{1}},\ldots,i_{\hat{j}_{p}}}\}
n_{i_{\hat{j}_{1}}\ldots i_{\hat{j}_{p}}},
\end{equation*}
where ${sort}\{{i_{\hat{i}_1},\ldots,i_{\hat{i}_{m-p}},i_{\hat{j}_{1}},\ldots,i_{\hat{j}_{p}}}\}$ indicates that all elements are sorted in natural number order of their subscripts.
\end{definition}
\noindent The following observations follow directly from the definition of the $J(p)$-mode product of tensors.
\begin{enumerate}
    \item If $m = p$, then the $J(p)$-mode product becomes the inner product  defined in Equation (\ref{innerproduct}).
    \item If $J(p) = [m] \setminus [m-p],$ then $I(p) = [m-p].$ For any $r \in I(p)$ and $s \in J(p)$, we get $\hat{i}_{r} = r$ and $\hat{j}_{s} = s$. So, for any $i_{r} \in [k_{r}](r \in I(p))$,
    \begin{equation*}
(\mathcal{M}\times_{J(p)}\mathcal{N})_{{{i}_{1}}\ldots {{i}_{m-p}}} =\sum_{i_{m-p+1=1}}^{\bar{k}_{1}}
\cdots\sum_{i_{m}=1}^{\bar{k}_{p}}
m_{i_{1} \ldots i_{{m-p}}i_{m-p+1} \ldots i_{m}}
n_{i_{m-p+1} \ldots i_{m}}.
    \end{equation*}
It is easy to see that $\mathcal{M}_{\times J(p)} \mathcal{N}$ is a tensor in $\mathbb{R}^{r_{1} \times \ldots r_{m-p}}.$ In this case, we use $\mathcal{M}\mathcal{N}$ to denote $\mathcal{M}_{\times J(p)} \mathcal{N}$.
\iffalse
\item Let $\mathcal{M} \in \mathbb{R}^{[2p,n]}$ and $\mathcal{N} \in \mathbb{R}^{[p,n]}$. Consider $J(p) = [2p] \setminus [2p-p] = \{p+1,p+2,...,2p\}$ and $I(p) = [2p-p] = \{1,2,...p\}$ then for any $i_{j} \in [n] (j \in [p])$ the $J(p)$-mode product of $\mathcal{M}$ and $\mathcal{N}$ is given as
\begin{equation*}
(\mathcal{M}\times_{J(p)}\mathcal{N})_{{{i}_{1}}\ldots {{i}_{p}}} =\sum_{i_{p+1=1}}^{n}
\cdots\sum_{i_{2p}=1}^{{n}}
m_{i_{1} \ldots i_{{p}}i_{p+1} \ldots i_{2p}}
n_{i_{p+1} \ldots i_{2p}}.  
\end{equation*}
\fi
   \item When $m=2$ and $p=1$, the $J(p)$-mode product of $\mathcal{M}$ and $\mathcal{N}$ reduces to the product between a matrix and a vector.
\end{enumerate}
Throughout the paper, unless otherwise specified, we consider only the $J(p)(=[m] \setminus [m-p])$-mode product of two tensors for a given $p \in [m]$.

We recall the following lemmas and definitions from \cite{Shang19062025}, which will be useful in the sequel.

\begin{lemma} {\rm \citep[Lemma 2.1]{Shang19062025}}
Let $\mathcal{M},\mathcal{N}$ be in $\mathbb{R}^{k_{1}\times\cdots \times k_{m}}$, and 
$\mathcal{X},\mathcal{Y}$ be $p$-th order and  $({k_{m-p+1}\times\cdots \times k_{m})}$-dimensional real tensors ($p \in [m]$), and $\alpha\in\mathbb{R}$.  Then the following relations hold:
$(\mathcal{M}+\mathcal{B})\mathcal{X}=\mathcal{M}\mathcal{X}+\mathcal{B}\mathcal{X},$ $\mathcal{M}(\mathcal{X}+\mathcal{Y})=\mathcal{M}\mathcal{X}+\mathcal{M}\mathcal{Y},$
$(\alpha\mathcal{M})\mathcal{X}=\alpha(\mathcal{M}\mathcal{X}),$ $ \mathcal{M}(\alpha\mathcal{X})=\alpha(\mathcal{M}\mathcal{X}).$
\end{lemma}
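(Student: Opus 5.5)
The statement is linearity of the $J(p)$-mode product in each argument. Since the paper compares tensors entrywise, I would prove all four identities by checking one arbitrary entry against the defining formula. (The statement names the second tensor $\mathcal{N}$ but writes $\mathcal{B}$ in the first identity; I read them as the same tensor.)

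\textbf{Setup.} With the standing convention $J(p)=[m]\setminus[m-p]$, the observation following Definition~\ref{contraction} gives, for every index tuple $(i_1,\ldots,i_{m-p})$ with $i_r\in[k_r]$,
\[
(\mathcal{M}\mathcal{X})_{i_1\ldots i_{m-p}}=\sum_{i_{m-p+1}=1}^{k_{m-p+1}}\cdots\sum_{i_m=1}^{k_m} m_{i_1\ldots i_m}\,x_{i_{m-p+1}\ldots i_m}.
\]
This is a finite sum over a fixed index range, independent of the tensors involved.

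\textbf{Linearity in the first argument.} The entries of $\mathcal{M}+\mathcal{N}$ and $\alpha\mathcal{M}$ are $m_{i_1\ldots i_m}+n_{i_1\ldots i_m}$ and $\alpha m_{i_1\ldots i_m}$. Substituting these into the formula gives
\[
\sum (m+n)x=\sum mx+\sum nx, \qquad \sum(\alpha m)x=\alpha\sum mx,
\]
by distributivity in $\mathbb{R}$ and linearity of finite sums. So $(\mathcal{M}+\mathcal{N})\mathcal{X}$ and $\mathcal{M}\mathcal{X}+\mathcal{N}\mathcal{X}$ agree entrywise, and likewise $(\alpha\mathcal{M})\mathcal{X}$ and $\alpha(\mathcal{M}\mathcal{X})$.

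\textbf{Linearity in the second argument.} Symmetrically, replace $x$ by $x+y$ or by $\alpha x$ in the summand. This yields $\mathcal{M}(\mathcal{X}+\mathcal{Y})=\mathcal{M}\mathcal{X}+\mathcal{M}\mathcal{Y}$ and $\mathcal{M}(\alpha\mathcal{X})=\alpha(\mathcal{M}\mathcal{X})$.

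\textbf{Where care is needed.} None of the steps is a genuine obstacle. The one point to check is that the result tensors have matching dimensions, $k_1\times\cdots\times k_{m-p}$, so that the sums on the right-hand sides are well defined. Equality then follows entrywise. If one prefers the general $J(p)$ with the sort convention, the same argument applies verbatim, since the sort only relabels indices and does not affect linearity.
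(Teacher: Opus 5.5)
Your entrywise verification is correct: each of the four identities follows directly from the defining sum $(\mathcal{M}\mathcal{X})_{i_1\ldots i_{m-p}}=\sum m_{i_1\ldots i_m}x_{i_{m-p+1}\ldots i_m}$ together with the entrywise definitions of addition and scalar multiplication, and reading $\mathcal{B}$ as $\mathcal{N}$ is the right interpretation of the typo. The paper gives no proof of its own here, since the lemma is quoted from Shang and Jia, and your direct check from Definition~\ref{contraction} is the standard argument, so there is nothing to add.
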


\begin{definition}\rm\label{TLCPDefinition}\cite{Shang19062025} For any $\mathcal{M} \in \mathbb{R}^{[2m,n]}$ and $\mathcal{Q} \in \mathbb{R}^{[m,n]}$, the linear complementarity problem over tensor space (for short, TLCP) is to seek a tensor $\mathcal{Z} \in \mathbb{R}^{[m,n]}$ such that
\begin{equation}\label{TLCPequation}
\mathcal{Z} \in \mathbb{R}^{[m,n]}_{+},~\mathcal{M}\mathcal{Q}+\mathcal{Z} \in \mathbb{R}^{[m,n]}_{+}~\text{and}~\langle \mathcal{Z},\mathcal{M}\mathcal{Q}+\mathcal{Z}\rangle =0.
\end{equation}
This problem is denoted by $\mathrm{TLCP}(\mathcal{M},\mathcal{Q})$, and the set of all  tensors $\mathcal{Z} \in \mathbb{R}^{[m,n]}$ satisfying Equation (\ref{TLCPequation}) is said to be the solution set of the $\mathrm{TLCP}(\mathcal{M},\mathcal{Q})$, denoted by $\mathrm{SOL}(\mathcal{M},\mathcal{Q})$.
\end{definition}

\noindent Note that when $m=1$, then the TLCP reduces to the classical linear complementarity problem \cite{MR3396730}.

\begin{definition}\rm \citep[Definition 2.2]{Shang19062025}
Let $\mathcal{M}$ be in $\mathbb{R}^{[2m,n]}$ and 
$\mathcal{Q}\in\mathbb{R}^{[m,n]}$.
The $\mathrm{TLCP}(\mathcal{M},\mathcal{Q})$
is said to be feasible if and only if there exists 
$\mathcal{Z}\in\mathbb{R}^{[m,n]}_{+}$ such that $\mathcal{M}\mathcal{Z}+\mathcal{Q}\in\mathbb{R}^{[m,n]}_+.$
\end{definition}

\begin{definition} \rm \citep[Definition 2.3]{Shang19062025}
Let $\mathcal{M}=(m_{i_1\ldots i_m})\in\mathbb{R}^{[m,n]}$ 
and $k\in[n]$. If $m^{(k)}_{i_1\ldots i_m}=m_{i_1\ldots i_m},
~\text{for any}~\, i_1,\ldots,i_m\in[k],
$ then the tensor $\mathcal{M}^{(k)}=(m^{(k)}_{i_1\ldots i_m}) \in \mathbb{R}^{[m,k]}$ is called a $k$-th sequential principal subtensor of $\mathcal{M}$.
\end{definition}

\begin{lemma} \rm{\citep[Lemma 2.2]{Shang19062025}}
Let $\mathcal{M}=(m_{i_1\ldots i_{2m}})\in\mathbb{R}^{[2m,n]}$ and 
$\mathcal{Z}\in\mathbb{R}^{[m,n]}$. 
If $\mathcal{M}$ is block symmetric, then $
\nabla \big[\mathcal{Z}(\mathcal{M}\mathcal{Z}) \big]
=2\mathcal{M}\mathcal{Z}.$
\end{lemma}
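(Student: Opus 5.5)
This is Lemma 2.2 of \cite{Shang19062025}: if $\mathcal{M}$ is block symmetric, then $\nabla[\mathcal{Z}(\mathcal{M}\mathcal{Z})]=2\mathcal{M}\mathcal{Z}$. We prove it by direct computation in coordinates. Regard $f(\mathcal{Z}) := \mathcal{Z}(\mathcal{M}\mathcal{Z})$ as a real-valued function on the Hilbert space $\mathbb{R}^{[m,n]}$. By the first observation after Definition \ref{contraction}, the $J(m)$-mode product of two $m$-th order tensors is the inner product (\ref{innerproduct}). Hence
\begin{equation*}
f(\mathcal{Z})=\langle \mathcal{Z},\mathcal{M}\mathcal{Z}\rangle=\sum_{i_1,\ldots,i_{2m}=1}^{n} m_{i_1\ldots i_m i_{m+1}\ldots i_{2m}}\, z_{i_1\ldots i_m}\, z_{i_{m+1}\ldots i_{2m}},
\end{equation*}
a quadratic form in the $n^m$ entries of $\mathcal{Z}$. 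The gradient is the tensor in $\mathbb{R}^{[m,n]}$ whose $(j_1,\ldots,j_m)$ entry is $\partial f/\partial z_{j_1\ldots j_m}$. This is consistent with the Riesz identification induced by (\ref{innerproduct}).

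Differentiate with respect to a fixed entry $z_{j_1\ldots j_m}$ using the product rule. The variable appears either as the first factor, when $(i_1,\ldots,i_m)=(j_1,\ldots,j_m)$, or as the second factor, when $(i_{m+1},\ldots,i_{2m})=(j_1,\ldots,j_m)$; the diagonal case where both hold is covered automatically by the product rule. This gives
\begin{equation*}
\frac{\partial f}{\partial z_{j_1\ldots j_m}}=\sum_{i_{m+1},\ldots,i_{2m}} m_{j_1\ldots j_m i_{m+1}\ldots i_{2m}} z_{i_{m+1}\ldots i_{2m}}+\sum_{i_1,\ldots,i_m} m_{i_1\ldots i_m j_1\ldots j_m} z_{i_1\ldots i_m}.
\end{equation*}
By the second observation after Definition \ref{contraction}, with $p=m$ and $J(m)=[2m]\setminus[m]$, the first sum is exactly $(\mathcal{M}\mathcal{Z})_{j_1\ldots j_m}$.

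For the second sum, use block symmetry: $m_{i_1\ldots i_m j_1\ldots j_m}=m_{j_1\ldots j_m i_1\ldots i_m}$. After renaming the dummy indices $i_1,\ldots,i_m$ to $i_{m+1},\ldots,i_{2m}$, the second sum also equals $(\mathcal{M}\mathcal{Z})_{j_1\ldots j_m}$. Therefore every entry of $\nabla f$ equals $2(\mathcal{M}\mathcal{Z})_{j_1\ldots j_m}$, which proves $\nabla[\mathcal{Z}(\mathcal{M}\mathcal{Z})]=2\mathcal{M}\mathcal{Z}$.

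The only delicate point is the product rule on diagonal terms, where the same entry of $\mathcal{Z}$ appears in both factors. Differentiating the bilinear expression term by term handles this without a separate case split. Equivalently, one can check that
\begin{equation*}
f(\mathcal{Z}+t\mathcal{H})=f(\mathcal{Z})+t\bigl(\langle\mathcal{H},\mathcal{M}\mathcal{Z}\rangle+\langle\mathcal{Z},\mathcal{M}\mathcal{H}\rangle\bigr)+O(t^2),
\end{equation*}
and that block symmetry gives $\langle\mathcal{Z},\mathcal{M}\mathcal{H}\rangle=\langle\mathcal{H},\mathcal{M}\mathcal{Z}\rangle$, using the linearity from Lemma 2.1 of \cite{Shang19062025}.
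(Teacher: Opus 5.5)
Your proof is correct and complete. The paper gives no proof of this lemma; it only quotes it from \cite{Shang19062025}, so there is no in-paper argument to compare against. Your argument is the standard one: write $\mathcal{Z}(\mathcal{M}\mathcal{Z})$ as the quadratic form $\sum m_{i_1\ldots i_{2m}} z_{i_1\ldots i_m} z_{i_{m+1}\ldots i_{2m}}$, differentiate entrywise with the product rule, and use block symmetry to merge the two resulting sums into $2(\mathcal{M}\mathcal{Z})_{j_1\ldots j_m}$. Your first-variation identity gives the same result without coordinates.
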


\begin{lemma} \rm{\citep[Lemma 2.3]{Shang19062025}}
Let $\mathcal{M} \in \mathbb{R}^{[2m,n]}$ and $\mathcal{Z} = (z_{i_{1}i_{2}...i_{m}}) \in \mathbb{R}^{[m,n]}$. Then $\nabla (\mathcal{M} \mathcal{Z}) = \mathcal{M}$ and $\nabla \bigg(\displaystyle{}{} \sum_{i_{1},...,i_{m}=1}^{n} z^{2}_{i_{1}...i_{m}}\bigg) = 2 \mathcal{Z
}.$
\end{lemma}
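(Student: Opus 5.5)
The plan is to prove both identities by direct componentwise differentiation. The first thing to fix is the convention behind the statement. We identify $\mathbb{R}^{[m,n]}$ with $\mathbb{R}^{n^m}$ through its entries $z_{k_1\ldots k_m}$. For a map $F:\mathbb{R}^{[m,n]}\to\mathbb{R}^{[m,n]}$, the gradient $\nabla F(\mathcal{Z})$ is the $2m$-th order tensor whose $(i_1,\ldots,i_m,k_1,\ldots,k_m)$ entry is $\partial F_{i_1\ldots i_m}/\partial z_{k_1\ldots k_m}$. For a scalar function $f$, $\nabla f(\mathcal{Z})$ is the $m$-th order tensor with entries $\partial f/\partial z_{k_1\ldots k_m}$. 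With this convention both claims reduce to elementary partial derivatives.

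For the first identity, I would write out the contraction using the case $J(m)=[2m]\setminus[m]$ of Definition~\ref{contraction}:
\[
(\mathcal{M}\mathcal{Z})_{i_1\ldots i_m}=\sum_{j_1,\ldots,j_m=1}^{n} m_{i_1\ldots i_m j_1\ldots j_m}\, z_{j_1\ldots j_m}.
\]
This expression is linear in the entries of $\mathcal{Z}$. Differentiating with respect to $z_{k_1\ldots k_m}$ kills every term except the one with $(j_1,\ldots,j_m)=(k_1,\ldots,k_m)$, because $\partial z_{j_1\ldots j_m}/\partial z_{k_1\ldots k_m}=\prod_{t}\delta_{j_t k_t}$. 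Hence
\[
\frac{\partial (\mathcal{M}\mathcal{Z})_{i_1\ldots i_m}}{\partial z_{k_1\ldots k_m}}=m_{i_1\ldots i_m k_1\ldots k_m},
\]
so entrywise $\nabla(\mathcal{M}\mathcal{Z})=\mathcal{M}$.

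For the second identity, set $f(\mathcal{Z})=\sum_{i_1,\ldots,i_m} z_{i_1\ldots i_m}^2$. Then
\[
\frac{\partial f}{\partial z_{k_1\ldots k_m}}=\sum_{i_1,\ldots,i_m} 2z_{i_1\ldots i_m}\prod_t\delta_{i_tk_t}=2z_{k_1\ldots k_m},
\]
which gives $\nabla f=2\mathcal{Z}$.

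There is no genuine analytic obstacle here. The only point requiring care is the index ordering in the first identity. The first $m$ indices of $\nabla(\mathcal{M}\mathcal{Z})$ must correspond to the output components and the last $m$ to the differentiation variables. This is the ordering that matches the layout of $\mathcal{M}$ under the $J(m)=[2m]\setminus[m]$ contraction, and it is why no block symmetry assumption is needed, in contrast with Lemma~2.2.
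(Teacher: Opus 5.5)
Your argument is correct: once you fix the convention that $\nabla$ of a map $\mathbb{R}^{[m,n]}\to\mathbb{R}^{[m,n]}$ puts the output indices first and the differentiation indices last, both identities are the immediate entrywise computations you carry out. The paper gives no proof of this lemma---it is only quoted from Shang and Jia \cite{Shang19062025}---so there is no alternative route to compare, and your direct differentiation is the natural argument.
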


\begin{definition}\rm\cite{Li2020,Shang19062025,SHANG2024115383} Let $\mathcal{M} \in \mathbb{R}^{[2m,n]}$. Then $\mathcal{M}$ is said to be a
\begin{enumerate}
\iffalse
\item[\rm(i)] $T$-$S$ tensor if and only if $\mathrm{TLCP}(\mathcal{M},\mathcal{Q})$ is feasible for each $\mathcal{Q} \in \mathbb{R}^{[m,n]}.$
\fi
\item[\rm(i)] $T$-copositive if $\mathcal{Z}(\mathcal{M}\mathcal{Z}) \geq {0}$  for each $\mathcal{Z} \in \mathbb{R}^{[m,n]}_{+}.$
\item[\rm(ii)] $T$-strictly copositive if $\mathcal{Z}(\mathcal{M}\mathcal{Z}) > {0}$  for each $\mathcal{Z} \in \mathbb{R}^{[m,n]}_{+}\setminus\{\mathcal{O}\}.$
\item[\rm(iii)] $T$-semi-positive if for any $\mathcal{Z} \in \mathbb{R}^{[m,n]}_{+}\setminus \{\mathcal{O}\}$, there exist $i_{i},i_{2},...,i_{m} \in [n]$ such that
$z_{i_{1}i_{2}...i_{m}} > 0,~\text{and}~(\mathcal{M}\mathcal{Z})_{i_{1}i_{2}...i_{m}} \geq 0.$
\item[\rm(iv)] $T$-strictly semi-positive if for any $\mathcal{Z} \in \mathbb{R}^{[m,n]}_{+}\setminus \{\mathcal{O}\}$, there exist $i_{i},i_{2},...,i_{m}$ in $[n]$ such that
$z_{i_{1}i_{2}...i_{m}} > 0,~\text{and}~(\mathcal{M}\mathcal{Z})_{i_{1}i_{2}...i_{m}} > 0.$
\item[\rm(v)] $T$-$P$ tensor if and only if for any $\mathcal{Z} \in \mathbb{R}^{[m,n]}\setminus\{\mathcal{O}\}$, there exist $i_{1},i_{2},...,i_{m} \in [n]$  such that $z_{i_{1}i_{2}...i_{m}}(\mathcal{M}\mathcal{Z})_{i_{1}i_{2}...i_{m}} > 0.$
\iffalse
\item[\rm(vi)] $T$-$R_{0}$ tensor if and only if the $\mathrm{TLCP}(\mathcal{M},\mathcal{O})$ has a unique solution (zero tensor).
\fi
\item[\rm(vi)] $T$-positive semidefinite if $\mathcal{Z}(\mathcal{M}\mathcal{Z}) \geq 0$ for any $\mathcal{Z} \in \mathbb{R}^{[m,n]}.$ 
\end{enumerate}
\end{definition}

\begin{proposition}\label{convexity}\rm{\citep[Proposition 4.9]{Li2020}} 
For any $\mathcal{M} \in \mathbb{R}^{[2m,n]}$ and $\mathcal{Q} \in \mathbb{R}^{[m,n]}$, the following statements are equivalent.
\begin{enumerate}
    \item[\rm(i)] The set $\mathrm{SOL}(\mathcal{M},\mathcal{Q})$ is a convex set.
    \item[\rm(ii)] For any $\mathcal{Z}^{1}, \mathcal{Z}^{2}$ in $\mathrm{SOL}(\mathcal{M},\mathcal{Q})$, the following equation holds.
    \begin{equation}\label{cross commutative}
      \langle \mathcal{Z}^{1}, \mathcal{M}\mathcal{Z}^{2}+\mathcal{Q} \rangle =  \langle \mathcal{Z}^{2}, \mathcal{M}\mathcal{Z}^{1}+\mathcal{Q} \rangle = 0.
    \end{equation}
\end{enumerate}
\end{proposition}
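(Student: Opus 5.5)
We prove the two implications separately. Throughout we write $\mathcal{W}^{i}:=\mathcal{M}\mathcal{Z}^{i}+\mathcal{Q}$, and we use only the bilinearity of the contraction product from \citep[Lemma 2.1]{Shang19062025} together with the nonnegativity of tensors in $\mathbb{R}^{[m,n]}_{+}$.

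\emph{(ii)$\Rightarrow$(i).} Let $\mathcal{Z}^{1},\mathcal{Z}^{2}\in\mathrm{SOL}(\mathcal{M},\mathcal{Q})$ and $\lambda\in[0,1]$, and set $\mathcal{Z}^{\lambda}=\lambda\mathcal{Z}^{1}+(1-\lambda)\mathcal{Z}^{2}$.
\begin{itemize}
\item $\mathcal{Z}^{\lambda}\geq\mathcal{O}$, since it is a convex combination of nonnegative tensors.
\item By Lemma 2.1, $\mathcal{M}\mathcal{Z}^{\lambda}+\mathcal{Q}=\lambda\mathcal{W}^{1}+(1-\lambda)\mathcal{W}^{2}\geq\mathcal{O}$, using $\lambda+(1-\lambda)=1$ to split $\mathcal{Q}$.
\item Expanding the inner product bilinearly gives
\[
\langle \mathcal{Z}^{\lambda},\mathcal{M}\mathcal{Z}^{\lambda}+\mathcal{Q}\rangle=\lambda^{2}\langle\mathcal{Z}^{1},\mathcal{W}^{1}\rangle+\lambda(1-\lambda)\big(\langle\mathcal{Z}^{1},\mathcal{W}^{2}\rangle+\langle\mathcal{Z}^{2},\mathcal{W}^{1}\rangle\big)+(1-\lambda)^{2}\langle\mathcal{Z}^{2},\mathcal{W}^{2}\rangle .
\]
The diagonal terms vanish because $\mathcal{Z}^{1},\mathcal{Z}^{2}$ are solutions, and the cross terms vanish by (\ref{cross commutative}).
\end{itemize}
Hence $\mathcal{Z}^{\lambda}\in\mathrm{SOL}(\mathcal{M},\mathcal{Q})$, so the solution set is convex.

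\emph{(i)$\Rightarrow$(ii).} Take $\mathcal{Z}^{1},\mathcal{Z}^{2}\in\mathrm{SOL}(\mathcal{M},\mathcal{Q})$. By convexity, $\mathcal{Z}^{\lambda}\in\mathrm{SOL}(\mathcal{M},\mathcal{Q})$ for every $\lambda\in[0,1]$. The expansion above then reduces to
\[
0=\lambda(1-\lambda)\big(\langle\mathcal{Z}^{1},\mathcal{W}^{2}\rangle+\langle\mathcal{Z}^{2},\mathcal{W}^{1}\rangle\big).
\]
Choosing $\lambda=1/2$ gives $\langle\mathcal{Z}^{1},\mathcal{W}^{2}\rangle+\langle\mathcal{Z}^{2},\mathcal{W}^{1}\rangle=0$.

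Each of these two inner products is nonnegative, because $\mathcal{Z}^{i}\geq\mathcal{O}$ and $\mathcal{W}^{j}\geq\mathcal{O}$ hold entrywise. A sum of two nonnegative numbers that equals zero forces both to be zero, which is exactly (\ref{cross commutative}).

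The argument is short, and the only step needing care is this last one. The convexity hypothesis alone only makes the \emph{sum} of the cross terms vanish. Splitting it into two separate equalities relies on the sign information coming from the feasibility of the two solutions.
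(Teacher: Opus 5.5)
Your proof is correct and complete. The bilinear expansion of $\langle \mathcal{Z}^{\lambda},\mathcal{M}\mathcal{Z}^{\lambda}+\mathcal{Q}\rangle$ settles (ii)$\Rightarrow$(i). In (i)$\Rightarrow$(ii) you rightly use the sign information $\mathcal{Z}^{i}\geq\mathcal{O}$, $\mathcal{W}^{j}\geq\mathcal{O}$ to split the vanishing sum into two vanishing cross terms.

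The paper gives no proof of its own here; it simply imports the result from \citep[Proposition 4.9]{Li2020}. Your argument is the standard one from the matrix LCP setting \cite{MR3396730}, carried over through the linearity of the contraction product. You also correctly read the feasibility condition as $\mathcal{M}\mathcal{Z}+\mathcal{Q}\geq\mathcal{O}$, not the typo $\mathcal{M}\mathcal{Q}+\mathcal{Z}$ in the paper's definition of the TLCP.
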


\begin{theorem}\label{TSP}\rm{\citep[Theorem 4.1]{Shang19062025}} A tensor $\mathcal{M} \in \mathbb{R}^{[2m,n]}$ is $T$-semi-positive if and only if the $\mathrm{TLCP}(\mathcal{M},\mathcal{Q})$ has a unique solution for every $\mathcal{Q} \in \mathbb{R}^{[m,n]}_{++}.$
\end{theorem}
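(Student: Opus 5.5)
Since $\mathcal{Q}\in\mathbb{R}^{[m,n]}_{++}$, the zero tensor $\mathcal{O}$ always belongs to $\mathrm{SOL}(\mathcal{M},\mathcal{Q})$. Indeed, $\mathcal{O}\geq\mathcal{O}$, $\mathcal{M}\mathcal{O}+\mathcal{Q}=\mathcal{Q}\geq \mathcal{O}$, and the inner product vanishes. Hence the statement is equivalent to the following: $\mathcal{M}$ is $T$-semi-positive if and only if $\mathcal{O}$ is the only solution of $\mathrm{TLCP}(\mathcal{M},\mathcal{Q})$ for every positive $\mathcal{Q}$. I will prove both directions directly from the definitions, entrywise, as in the classical matrix case. (I read the complementarity condition in Definition \ref{TLCPDefinition} as $\mathcal{M}\mathcal{Z}+\mathcal{Q}\geq\mathcal{O}$ and $\langle\mathcal{Z},\mathcal{M}\mathcal{Z}+\mathcal{Q}\rangle=0$, as in the introduction.)

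\emph{Necessity.} Let $\mathcal{M}$ be $T$-semi-positive and $\mathcal{Q}>\mathcal{O}$. Suppose $\mathcal{Z}\in\mathrm{SOL}(\mathcal{M},\mathcal{Q})$ with $\mathcal{Z}\neq\mathcal{O}$. Since $\mathcal{Z}\geq\mathcal{O}$ is nonzero, semi-positivity gives indices $i_1,\ldots,i_m$ with $z_{i_1\ldots i_m}>0$ and $(\mathcal{M}\mathcal{Z})_{i_1\ldots i_m}\geq 0$. Then $(\mathcal{M}\mathcal{Z}+\mathcal{Q})_{i_1\ldots i_m}\geq q_{i_1\ldots i_m}>0$, so the product $z_{i_1\ldots i_m}(\mathcal{M}\mathcal{Z}+\mathcal{Q})_{i_1\ldots i_m}$ is strictly positive. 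Every term of the inner product (\ref{innerproduct}) is a product of two nonnegative numbers, so $\langle\mathcal{Z},\mathcal{M}\mathcal{Z}+\mathcal{Q}\rangle>0$. This contradicts complementarity, so the solution is unique.

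\emph{Sufficiency.} Argue by contraposition. Suppose $\mathcal{M}$ is not $T$-semi-positive. Then there is $\mathcal{Z}\in\mathbb{R}^{[m,n]}_+\setminus\{\mathcal{O}\}$ such that $(\mathcal{M}\mathcal{Z})_{i_1\ldots i_m}<0$ for every index with $z_{i_1\ldots i_m}>0$. Define $\mathcal{Q}$ entrywise by
\begin{equation*}
q_{i_1\ldots i_m}=\begin{cases} -(\mathcal{M}\mathcal{Z})_{i_1\ldots i_m}, & \text{if } z_{i_1\ldots i_m}>0,\\ |(\mathcal{M}\mathcal{Z})_{i_1\ldots i_m}|+1, & \text{if } z_{i_1\ldots i_m}=0.\end{cases}
\end{equation*}
Then $\mathcal{Q}\in\mathbb{R}^{[m,n]}_{++}$. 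The tensor $\mathcal{M}\mathcal{Z}+\mathcal{Q}$ vanishes on the support of $\mathcal{Z}$ and is at least $1$ off it. Hence $\mathcal{M}\mathcal{Z}+\mathcal{Q}\geq\mathcal{O}$ and $\langle\mathcal{Z},\mathcal{M}\mathcal{Z}+\mathcal{Q}\rangle=0$. So $\mathcal{Z}$ and $\mathcal{O}$ are two distinct solutions of $\mathrm{TLCP}(\mathcal{M},\mathcal{Q})$, which contradicts uniqueness.

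The only step needing care is the construction of $\mathcal{Q}$ in the sufficiency part. It must be strictly positive off the support of $\mathcal{Z}$, which the added constant $1$ guarantees, and exact cancellation on the support uses the strict negativity of $(\mathcal{M}\mathcal{Z})$ there. Both directions use only the linearity of the contraction $\mathcal{Z}\mapsto\mathcal{M}\mathcal{Z}$ and the entrywise form of the inner product (\ref{innerproduct}).
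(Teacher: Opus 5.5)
Your proof is correct and complete. $\mathcal{O}$ solves $\mathrm{TLCP}(\mathcal{M},\mathcal{Q})$ for every $\mathcal{Q}>\mathcal{O}$. For necessity, semi-positivity forces a strictly positive term in $\langle\mathcal{Z},\mathcal{M}\mathcal{Z}+\mathcal{Q}\rangle$ for any nonzero feasible $\mathcal{Z}\ge\mathcal{O}$. For sufficiency, your $\mathcal{Q}$ is strictly positive and makes $\mathcal{M}\mathcal{Z}+\mathcal{Q}$ vanish exactly on the support of a non-semi-positivity witness $\mathcal{Z}$ and be at least $1$ off it, so $\mathcal{Z}$ is a second solution.

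The paper does not prove this statement itself; it imports it from Shang and Jia. Your argument is the standard direct one, a transcription to tensor space of the classical proof for semi-positive matrices and tensors. You were also right to read $\mathcal{M}\mathcal{Q}+\mathcal{Z}$ in Definition~\ref{TLCPDefinition} as the typo it is, for $\mathcal{M}\mathcal{Z}+\mathcal{Q}$.
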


\begin{lemma}\label{QPP1}{\rm \citep[Lemma 4.1]{Shang19062025}} 
Consider the following quadratic programming problem:
\begin{equation}\label{QPP}
\begin{aligned}
\min \quad & \mathcal{Z}(\mathcal{M}\mathcal{Z}) + \mathcal{Q}\mathcal{Z} \\
\text{\rm {s.t.}} \quad & \mathcal{M}\mathcal{Z} + \mathcal{Q} \in \mathbb{R}^{[m,n]}_{+}, 
& \mathcal{Z} \in \mathbb{R}^{[m,n]}_{+}.
\end{aligned}
\end{equation}
Then the feasible set of the $\mathrm{TLCP}(\mathcal{M},\mathcal{Q})$ is also the feasible set of (\ref{QPP}). 
If $\mathcal{M} =(m_{i_{1}i_{2}...i_{2m}}) \in \mathbb{R}^{[2m,n]}$ is a block symmetric tensor and $\mathrm{TLCP}(\mathcal{M},\mathcal{Q})$ is feasible, then the quadratic program (\ref{QPP}) has an optimal solution, $\mathcal{Z}^{*}$. Moreover,  there exists a multiplier $\mathcal{U}^{*}$ satisfying the conditions:
\begin{eqnarray}
& 2\mathcal{M}\mathcal{Z}^{*} + \mathcal{Q} - \mathcal{M}\mathcal{U}^{*} \in \mathbb{R}^{[m,n]}_{+}, \label{KKT1}\\
& \langle \mathcal{Z}^{*}, 2\mathcal{M}\mathcal{Z}^{*} + \mathcal{Q} - \mathcal{M}\mathcal{U}^{*} \rangle = 0, \label{KKT2}\\
& \mathcal{U}^{*} \in \mathbb{R}^{[m,n]}_{+}, \langle \mathcal{U}^{*}, \mathcal{M}{\mathcal{Z}^{*}} + \mathcal{Q} \rangle = 0, \label{KKT3}\\
& \mathcal{Z}^{*} \in \mathbb{R}^{[m,n]}_{+},~ \mathcal{M}\mathcal{Z}^{*} + \mathcal{Q} \in \mathbb{R}^{[m,n]}_{+}. \label{KKT4}
\end{eqnarray}
Finally, the tensors $\mathcal{Z}^{*}$ and $\mathcal{U}^{*}$ satisfy
\begin{equation}\label{KKT5}
   (\mathcal{Z}^{*}-\mathcal{U}^{*})_{i_{1}i_{2}...i_{m}}\big[\mathcal{M}(\mathcal{Z}^{*}-\mathcal{U}^{*})\big]_{i_{1}i_{2}...i_{m}} \leq {0},~\text{for all}~ i_{1},i_{2},...,i_{m} \in [n].
\end{equation}
\end{lemma}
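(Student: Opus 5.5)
The plan is to follow the classical argument for the LCP--QP connection (Cottle--Pang--Stone), transcribed to tensor spaces using the linearity of the contraction product (Lemma 2.1 of \cite{Shang19062025}) and the gradient formulas in the two gradient lemmas above. The first claim is immediate: the constraints of (\ref{QPP}) are literally $\mathcal{Z}\in\mathbb{R}^{[m,n]}_+$ and $\mathcal{M}\mathcal{Z}+\mathcal{Q}\in\mathbb{R}^{[m,n]}_+$, which is the definition of feasibility of $\mathrm{TLCP}(\mathcal{M},\mathcal{Q})$.

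\textbf{Existence of an optimal solution.} We identify $\mathbb{R}^{[m,n]}$ with $\mathbb{R}^{n^m}$, so (\ref{QPP}) is a quadratic program with a polyhedral feasible set. The objective equals $\langle \mathcal{Z},\mathcal{M}\mathcal{Z}+\mathcal{Q}\rangle$, which is $\ge 0$ on the feasible set as an inner product of two nonnegative tensors. Hence the objective is bounded below on a nonempty polyhedron. The Frank--Wolfe theorem then gives an optimal solution $\mathcal{Z}^*$.

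\textbf{KKT conditions.} All constraints are linear, so constraint qualification holds automatically and KKT multipliers exist at $\mathcal{Z}^*$. By block symmetry and Lemma 2.2 of \cite{Shang19062025}, the gradient of the objective is $2\mathcal{M}\mathcal{Z}+\mathcal{Q}$. The gradient of $\langle\mathcal{U},\mathcal{M}\mathcal{Z}+\mathcal{Q}\rangle$ with respect to $\mathcal{Z}$ is the transposed contraction of $\mathcal{M}$ with $\mathcal{U}$, and block symmetry turns this into $\mathcal{M}\mathcal{U}$. Writing the Lagrangian
\[
\mathcal{Z}(\mathcal{M}\mathcal{Z})+\mathcal{Q}\mathcal{Z}-\langle \mathcal{U},\mathcal{M}\mathcal{Z}+\mathcal{Q}\rangle-\langle\mathcal{V},\mathcal{Z}\rangle
\]
with $\mathcal{U},\mathcal{V}\ge\mathcal{O}$, stationarity gives $\mathcal{V}=2\mathcal{M}\mathcal{Z}^*+\mathcal{Q}-\mathcal{M}\mathcal{U}^*\ge\mathcal{O}$. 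Complementary slackness gives $\langle\mathcal{Z}^*,\mathcal{V}\rangle=0$ and $\langle\mathcal{U}^*,\mathcal{M}\mathcal{Z}^*+\mathcal{Q}\rangle=0$. These are exactly (\ref{KKT1})--(\ref{KKT4}).

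\textbf{Componentwise inequality (\ref{KKT5}).} This is the only step needing care. Set $\mathcal{W}=\mathcal{M}\mathcal{Z}^*+\mathcal{Q}$; by linearity, $\mathcal{V}=\mathcal{W}+\mathcal{M}(\mathcal{Z}^*-\mathcal{U}^*)$. Each of the pairs $(\mathcal{Z}^*,\mathcal{V})$ and $(\mathcal{U}^*,\mathcal{W})$ is nonnegative with zero inner product, so complementarity holds entrywise. Fix an index $i=(i_1,\dots,i_m)$.
\begin{itemize}
\item If $z^*_i>0$, then $v_i=0$, so $[\mathcal{M}(\mathcal{Z}^*-\mathcal{U}^*)]_i=-w_i$. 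Using $u^*_iw_i=0$, the product becomes
\[
-(z^*_i-u^*_i)w_i=-z^*_iw_i\le 0 .
\]
\item If $z^*_i=0$ and $u^*_i=0$, the product is trivially $0$.
\item If $z^*_i=0$ and $u^*_i>0$, then $w_i=0$, so $[\mathcal{M}(\mathcal{Z}^*-\mathcal{U}^*)]_i=v_i\ge0$. The product is then $-u^*_iv_i\le0$.
\end{itemize}
This covers all cases and proves (\ref{KKT5}).

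The main obstacle I expect is the bookkeeping in the gradient of $\langle\mathcal{U},\mathcal{M}\mathcal{Z}\rangle$. This is exactly where block symmetry is needed to write it as $\mathcal{M}\mathcal{U}$ rather than a transposed contraction.
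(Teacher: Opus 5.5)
Your proof is correct. This paper gives no proof of the lemma; it quotes it from \cite{Shang19062025}. Your argument is the standard Cottle--Pang--Stone QP argument transcribed to $\mathbb{R}^{[m,n]}$: Frank--Wolfe for existence, using that the objective equals $\langle \mathcal{Z},\mathcal{M}\mathcal{Z}+\mathcal{Q}\rangle\ge 0$ on the feasible set; KKT from the linearity of the constraints; and block symmetry to identify the transposed contraction with $\mathcal{M}\mathcal{U}$.

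Your three-case check of the final componentwise inequality is fine. It also collapses to one line, since $z^*_iv_i=u^*_iw_i=0$ gives $(z^*_i-u^*_i)(v_i-w_i)=-z^*_iw_i-u^*_iv_i\le 0$.
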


\section{\texorpdfstring{$T$}{T}-column sufficient tensor}\label{SectionT-CS}
In this section, we first define a $T$-column sufficient tensor, which is a natural extension of the column sufficient matrix from $\mathbb{R}^{n}$ to $\mathbb{R}^{[2m,n]}.$
\begin{definition}\rm\label{T-CSdefinition} A tensor $\mathcal{M} \in \mathbb{R}^{[2m,n]}$  is said to be a 
\begin{enumerate}
\item[\rm(i)] $T$-column sufficient tensor on $\mathbb{K} \subseteq \mathbb{R}^{[m,n]}$ if and only if for any $\mathcal{Z} \in \mathbb{K},$ 
\begin{equation*}\label{T-CCSequation}
{z}_{i_{1}i_{2}...i_{m}}(\mathcal{M}\mathcal{Z})_{i_{1}i_{2}...i_{m}}
\leq {0},~{\forall}~ i_{j} \in [n] \implies  
{z}_{i_{1}i_{2}...i_{m}}(\mathcal{M}\mathcal{Z})_{i_{1}i_{2}...i_{m}} = {0},~{\forall}~ i_{j}\in [n]. 
\end{equation*}
\item[\rm(ii)]  $T$-column sufficient tensor if and only if for any $\mathcal{Z} \in \mathbb{R}^{[m,n]},$
\begin{equation*}\label{T-CSequation}
{z}_{i_{1}i_{2}...i_{m}}(\mathcal{M}\mathcal{Z})_{i_{1}i_{2}...i_{m}}
\leq {0},~{\forall}~ i_{j}\in [n] \implies  
{z}_{i_{1}i_{2}...i_{m}}(\mathcal{M}\mathcal{Z})_{i_{1}i_{2}...i_{m}} = {0},~{\forall}~ i_{j}\in [n]. 
\end{equation*}
\end{enumerate}
\end{definition}

\begin{remark}\rm
It is evident from the above definition that when $m=1$, then the definition of a $T$-column sufficient tensor coincides with a column sufficient matrix \citep[Definition 3.5.1]{MR3396730}. Moreover, any $T$-column sufficient tensor is a $T$-column sufficient tensor on $\mathbb{K} \subseteq \mathbb{R}^{[m,n]}$, and the converse is not true in general (see Example \ref{T-CS on K but not T-CS}). 
\end{remark}

\begin{remark}\rm
Chen et al. \citep[Definition 2]{MR3778365} introduced the concept of a column sufficient tensor as a generalization of the column sufficient matrix to the tensor setting and established several spectral and structural properties of column sufficient tensors. Although both the column sufficient tensor and the structured tensor proposed in our work extend the notion of a column sufficient matrix to tensors, the two concepts are completely different. In our definition (see Definition \ref{T-CSdefinition}), we consider the tensor $\mathcal{Z}$ to be in the tensor space $\mathbb{R}^{[m,n]}$. In contrast, in the definition of the column sufficient tensor given by Chen et al. \cite{MR3778365}, the vector  $\mathbf{z}$ is an element of the vector space $\mathbb{R}^{n}$.  To clearly reflect this distinction, we refer to our newly defined structured tensor as the $T$-column sufficient tensor, where $T$ indicates the tensor space.
\end{remark}   

In the following, we provide some examples to illustrate the Definition \ref{T-CSdefinition}.
\begin{example}\rm
Let $\mathcal{M}=(m_{i_{1}i_{2}...i_{2m}})$  be a tensor in $\mathbb{R}^{[2m,n]}$ with entries  
\begin{equation*}
    m_{i_{1}...i_{m}i_{m+1}...i_{2m}} = \begin{cases}
       \geq {0},~\text{if}~(i_{1},i_{2},...,i_{m}) = (i_{m+1},i_{m+2},...,i_{2m}),\\
       0,~\text{else}.
    \end{cases}
\end{equation*}
Then $\mathcal{M}$ is $T$-column sufficient. Indeed, if any $\mathcal{Z} =(z_{i_{1}i_{2}...i_{m}}) \in \mathbb{R}^{[m,n]}$ satisfies
\begin{equation*}
    z_{i_{1}i_{2}...i_{m}}(\mathcal{M}\mathcal{Z})_{i_{1}i_{2}...i_{m}} \leq {0},~\text{for all}~i_{1},i_{2},...i_{m} \in [n],
\end{equation*}
then $m_{i_{1}i_{2}...i_{m}i_{1}i_{2}...i_{m}}z^{2}_{i_{1}i_{2}...i_{m}} \leq {0}~\text{for all}~i_{1},i_{2},...i_{m} \in [n]$. Since $m_{i_{1}i_{2}...i_{m}i_{1}i_{2}...i_{m}} \geq {0},$ for all $i_{j} \in [n]~(j \in [m])$, therefore $m_{i_{1}i_{2}...i_{m}i_{1}i_{2}...i_{m}}z^{2}_{i_{1}i_{2}...i_{m}}= {0}~\text{for all}~i_{1},i_{2},...i_{m} \in [n].$ This implies that $z_{i_{1}i_{2}...i_{m}}(\mathcal{M}\mathcal{Z})_{i_{1}i_{2}...i_{m}} = {0},~\text{for all}~i_{1},i_{2},...i_{m} \in [n].$ Thus $\mathcal{M}$ is $T$-column sufficient.
\end{example}

\begin{example}\rm\label{T-CS on K but not T-CS}
Let $\mathcal{M} = (m_{i_{1}i_{2}i_{3}i_{4}}) \in \mathbb{R}^{[4,2]},$ where $m_{1111} = m_{1212} =m_{2122} = 2, m_{2211}=m_{2112}=m_{2222}=1$ and other entries be zero. We claim that \rm(i) $\mathcal{M}$ is $T$-column sufficient on $\mathbb{R}^{[2,2]}_{+}$,  \rm(ii) but not $T$-column sufficient. For any $\mathcal{Z} \in \mathbb{R}^{[2,2]}$, we have 
\begin{equation*}
\mathcal{M}\mathcal{Z} =   \begin{pmatrix}
2z_{11} & 2z_{21} \\
2z_{22}+z_{12} & z_{11}+z_{22}
\end{pmatrix}.
\end{equation*}
\begin{itemize}
\item[\rm(i)] Let $z_{ij}(\mathcal{M}\mathcal{Z})_{ij} \leq {0},$ for all $i,j \in [2],$ where $\mathcal{Z} \in \mathbb{R}^{[2,2]}_{+}.$ This implies that 
\begin{equation*}
  2z_{11}^{2} \leq {0},2z_{12}z_{21} \leq {0}, z_{21}(2z_{22}+z_{12})\leq {0},~\text{and}~z_{22}(z_{11}+z_{22}) \leq {0}.      
\end{equation*}
From this, we obtain $z_{11}=0, z_{22}=0$, and $z_{12}z_{21} = 0.$ Thus $z_{ij}(\mathcal{M}\mathcal{Z})_{ij} =0,$ for all $i,j \in [2],$ and for each $\mathcal{Z} \in \mathbb{R}^{[2,2]}_{+}.$ Hence $\mathcal{M}$ is $T$-column sufficient on $\mathbb{R}^{[2,2]}_{+}.$ 
\item[\rm(ii)] Let $\mathcal{Z}= \begin{pmatrix}
    0 & 1 \\
    -1 & 0
\end{pmatrix} \in \mathbb{R}^{[2,2]}.$ Then, we have $z_{11}(\mathcal{M}\mathcal{Z})_{11} = 0, z_{12}(\mathcal{M}\mathcal{Z})_{12} = -2, z_{21}(\mathcal{M}\mathcal{Z})_{21} = -1, z_{22}(\mathcal{M}\mathcal{Z})_{22} = 0.$ It follows from the definition of a $T$-column sufficient tensor that $\mathcal{M}$ is not $T$-column sufficient. 
\end{itemize}
\end{example}

The following proposition characterizes the relationship between $T$-column sufficient tensors and other structured tensors.

\begin{proposition}\label{T-P is T-CS} 
Let $\mathcal{M} \in \mathbb{R}^{[2m,n]}$.
\begin{enumerate}
\item[\rm(i)] If $\mathcal{M}$ is a $T$-positive semidefinite tensor, then it  is $T$-column sufficient.
\item[\rm(ii)] If $\mathcal{M}$ is a $T$-$P$ tensor, then $\mathcal{M}$ is $T$-column sufficient.
\item[\rm(iii)]  If $\mathcal{M}$ is  $T$-copositive, then it is $T$-column sufficient on $\mathbb{R}^{[m,n]}_{+}.$
\end{enumerate}
\end{proposition}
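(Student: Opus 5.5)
The three parts all rest on one identity. Since the inner product of Equation~(\ref{innerproduct}) is the case $p=m$ of the $J(p)$-mode product, the quantity $\mathcal{Z}(\mathcal{M}\mathcal{Z})$ is simply $\langle \mathcal{Z},\mathcal{M}\mathcal{Z}\rangle$. Writing it out gives
\begin{equation*}
\mathcal{Z}(\mathcal{M}\mathcal{Z})=\sum_{i_{1},\ldots,i_{m}=1}^{n} z_{i_{1}\ldots i_{m}}(\mathcal{M}\mathcal{Z})_{i_{1}\ldots i_{m}},
\end{equation*}
so it is the sum of exactly the componentwise products that appear in Definition~\ref{T-CSdefinition}. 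The plan is to play the hypothesis of the definition (every term is $\leq 0$) against the sign information each structured class gives on this sum or on individual terms.

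For (i), I take an arbitrary $\mathcal{Z}\in\mathbb{R}^{[m,n]}$ such that $z_{i_1\ldots i_m}(\mathcal{M}\mathcal{Z})_{i_1\ldots i_m}\leq 0$ for all indices. Summing these terms gives $\mathcal{Z}(\mathcal{M}\mathcal{Z})\leq 0$. On the other hand, $T$-positive semidefiniteness gives $\mathcal{Z}(\mathcal{M}\mathcal{Z})\geq 0$. Hence the sum equals $0$. A sum of nonpositive terms that equals zero must have every term equal to zero, so each product vanishes. This is exactly the $T$-column sufficient conclusion.

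Part (iii) follows the same argument with $\mathcal{Z}$ restricted to $\mathbb{R}^{[m,n]}_{+}$. On that set, $T$-copositivity supplies $\mathcal{Z}(\mathcal{M}\mathcal{Z})\geq 0$, and the rest is unchanged, giving $T$-column sufficiency on $\mathbb{K}=\mathbb{R}^{[m,n]}_{+}$.

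For (ii), the argument is by cases. Again let $\mathcal{Z}$ satisfy all products $\leq 0$. If $\mathcal{Z}=\mathcal{O}$, then every product is trivially $0$. If $\mathcal{Z}\neq\mathcal{O}$, the $T$-$P$ property produces an index tuple with $z_{i_1\ldots i_m}(\mathcal{M}\mathcal{Z})_{i_1\ldots i_m}>0$, which contradicts the hypothesis. So the hypothesis can only hold for $\mathcal{Z}=\mathcal{O}$, and there the conclusion holds.

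None of these steps should cause real difficulty. The only point that needs care is the identification of $\mathcal{Z}(\mathcal{M}\mathcal{Z})$ with the componentwise sum, which I will justify directly from Definition~\ref{contraction} with $p=m$.
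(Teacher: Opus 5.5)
Your proposal is correct and follows essentially the same argument as the paper. For (i) and (iii) you sum the nonpositive componentwise products to get $\mathcal{Z}(\mathcal{M}\mathcal{Z})\leq 0$ and combine this with $T$-positive semidefiniteness (resp.\ $T$-copositivity) to force every term to vanish; for (ii) you note that the $T$-$P$ property rules out any nonzero $\mathcal{Z}$ satisfying the hypothesis, and your explicit identification of $\mathcal{Z}(\mathcal{M}\mathcal{Z})$ with $\langle \mathcal{Z},\mathcal{M}\mathcal{Z}\rangle$ via the case $p=m$ of the $J(p)$-mode product is a step the paper uses only implicitly.
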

\begin{proof}
\rm(i):  Let $\mathcal{M} \in \mathbb{R}^{[2m,n]}$ be $T$-positive semidefinite. This implies that $\mathcal{Z}(\mathcal{M}\mathcal{Z}) \geq {0}$ for any $\mathcal{Z} \in \mathbb{R}^{[m,n]}.$ Suppose that there exists  $\mathcal{Z}$\!\! in $\mathbb{R}^{[m,n]}$ satisfying ${z}_{i_{1}i_{2}...i_{m}}(\mathcal{M}\mathcal{Z})_{i_{1}i_{2}...i_{m}}\\
\leq {0},~{\forall}~ i_{j}\in [n]$. This gives $\mathcal{Z}(\mathcal{M}\mathcal{Z}) \leq {0}$, and therefore $\mathcal{Z}(\mathcal{M}\mathcal{Z}) =0$. Due to each component of $\mathcal{Z}(\mathcal{M}\mathcal{Z})$ being nonpositive, we get ${z}_{i_{1}i_{2}...i_{m}}(\mathcal{M}\mathcal{Z})_{i_{1}i_{2}...i_{m}}
= {0},~{\forall}~ i_{j}\in [n]$. Thus $\mathcal{M}$ is $T$-column sufficient.\\
\rm(ii): Let $\mathcal{M} \in \mathbb{R}^{[2m,n]}$ be a $T$-$P$ tensor. Suppose that $\mathcal{Z} \in \mathbb{R}^{[m,n]}$ satisfies
 \begin{equation}\label{T-P imply T-CS}
  z_{i_{1}i_{2}...i_{m}} (\mathcal{M}\mathcal{Z})_{i_{1}i_{2}...i_{m}} \leq {0},~\text{for all}~i_{1},i_{2},...,i_{m} \in [n].   
 \end{equation}
If $\mathcal{Z} \neq \mathcal{O},$ then we have a nonzero tensor $\mathcal{Z} \in \mathbb{R}^{[m,n]}$  satisfying Equation (\ref{T-P imply T-CS}), which is a contradiction to $\mathcal{M}$ being a $T$-$P$ tensor. Hence, we must have $\mathcal{Z} = \mathcal{O}$, and therefore $z_{i_{1}i_{2}...i_{m}} (\mathcal{M}\mathcal{Z})_{i_{1}i_{2}...i_{m}} ={0},~\text{for all}~i_{1},i_{2},...,i_{m} \in [n].$ Thus $\mathcal{M}$ is a $T$-column sufficient tensor.\\
\rm(iii): The proof follows similarly as $\rm(i)$.
\end{proof}

The converse of the above statements in Proposition \ref{T-P is T-CS} is not true in general, as demonstrated by the following examples.

\iffalse
\begin{proposition}
Any $T$-copositive tensor is $T$-column sufficient on $\mathbb{R}^{[m,n]}_{+},$ but the converse may not be true (refer to Example \ref{T-COP but not T-CCS}). 
\end{proposition}

\begin{proposition}
 However, a $T$-column sufficient tensor may not be a $T$-$P$ tensor (refer to Example \ref{T-CS but not T-P}). 
\end{proposition}
\begin{proof}
 
\end{proof}
\fi

\begin{example}\rm\label{T-CS but not T-PSD}
Let $\mathcal{M} = (m_{i_{1}i_{2}i_{3}i_{4}}) \in \mathbb{R}^{[4,2]},$ where $m_{2111} = m_{2222} =1, m_{1121}=-2$ and other entries be zero. We claim that \rm(i) $\mathcal{M}$ is $T$-column sufficient,  \rm(ii) but not $T$-positive semidefinite. For any $\mathcal{Z} \in \mathbb{R}^{[2,2]}$, we have 
\begin{equation*}
  \mathcal{M}\mathcal{Z} =   \begin{pmatrix}
-2z_{21} & 0 \\
z_{11} & z_{22}
\end{pmatrix}, ~\text{and}~\mathcal{Z}(\mathcal{M}\mathcal{Z}) = -z_{11}z_{21}+z_{22}^{2}.
\end{equation*}
\begin{itemize}
\item[\rm(i)] Let ${z}_{ij}(\mathcal{M}\mathcal{Z})_{ij} \leq {0},$ for all $i,j \in [2].$ This implies  that $-2z_{11}z_{21} \leq 0, 
 z_{11}z_{21} \leq 0,~\text{and}~
 z_{22}^{2} \leq 0$. From this, we obtain $z_{11}z_{21}=0,$ and $z_{22}=0,$ and hence ${z}_{ij}(\mathcal{M}\mathcal{Z})_{ij} = {0},$ for all $i,j \in [2]$. Thus $\mathcal{M}$ is $T$-column sufficient.
 \item[\rm(ii)] Let $\mathcal{Z} = \begin{pmatrix}
     1 & 0 \\
     1 & 0
 \end{pmatrix} \in \mathbb{R}^{[2,2]}.$ Then $\mathcal{Z}(\mathcal{M}\mathcal{Z}) = -1 < 0.$ Thus $\mathcal{M}$ is not $T$-positive semidefinite.
\end{itemize}
\end{example}

\begin{example}\rm\label{T-COP but not T-CCS}
Let $\mathcal{M} = (m_{i_{1}i_{2}i_{3}i_{4}}) \in \mathbb{R}^{[4,2]},$ where $m_{1112} = -10,  m_{1212} = m_{2122}= m_{2211} =1$ and other entries be zero. We claim that \rm(i) $\mathcal{M}$ is $T$-column sufficient on $\mathbb{R}^{[2,2]}_{+}$,  \rm(ii) but not $T$-copositive. For any $\mathcal{Z} \in \mathbb{R}^{[2,2]}$, we have 
\begin{equation*}
\mathcal{M}\mathcal{Z} =   \begin{pmatrix}
-10z_{12} & z_{12}\\
z_{22} & z_{11}
\end{pmatrix}.
\end{equation*} 
\begin{itemize}
\item[\rm(i)] Let $\mathcal{Z} = \begin{pmatrix}
z_{11} & z_{12} \\
z_{21} & z_{22}
\end{pmatrix} \in \mathbb{R}^{[2,2]}_{+}$ satisfies
$z_{ij}(\mathcal{M}\mathcal{Z})_{ij} \leq {0},$ for all $i,j \in [2].$ This implies that
\begin{equation*}
   -10z_{11}z_{12} \leq {0},z_{12}^{2} \leq {0},z_{21}z_{22} \leq {0},~\text{and}~z_{11}z_{22} \leq {0}.     
\end{equation*}
From the above equation, it is easy to verify that $z_{ij}(\mathcal{M}\mathcal{Z})_{ij} = 0,$ for all $i,j \in [2]$. Thus $\mathcal{M}$ is $T$-column sufficient on $\mathbb{R}^{[2,2]}_{+}.$
\item[\rm(ii)] Let $\mathcal{Z} = \begin{pmatrix}
 1 & 1 \\
 1 & 1
\end{pmatrix} \in \mathbb{R}^{[2,2]}_{+}.$ Then $\mathcal{M}\mathcal{Z} = \begin{pmatrix}
  -10 & 1 \\
  1 & 1
\end{pmatrix}.$ Thus, we obtain $\mathcal{Z}(\mathcal{M}\mathcal{Z}) = -7 < 0.$ Therefore $\mathcal{M}$ is not $T$-copositive. 
\end{itemize}
\end{example}

\begin{example}\rm\label{T-CS but not T-P}
Let $\mathcal{M} = (m_{i_{1}i_{2}i_{3}i_{4}}) \in \mathbb{R}^{[4,2]},$ where $m_{1112} = -10, m_{1211} = m_{1212} = m_{2122} =1, m_{2221}=-1$ and other entries be zero. We claim that \rm(i) $\mathcal{M}$ is $T$-column sufficient,  \rm(ii) but not $T$-$P$. For any $\mathcal{Z} \in \mathbb{R}^{[2,2]}$, we have 
\begin{equation*}
\mathcal{M}\mathcal{Z} =   \begin{pmatrix}
-10z_{12} & z_{11}+z_{12}\\
z_{22} & -z_{21}
\end{pmatrix}.
\end{equation*} 
\begin{itemize}
\item[\rm(i)] Let ${z}_{ij}(\mathcal{M}\mathcal{Z})_{ij} \leq {0},$ for all $i,j \in [2].$ This implies  that $$-10z_{11}z_{12} \leq 0, z_{12}(z_{11}+z_{12}) \leq 0,z_{21}z_{22} \leq 0,~\text{and}~ -z_{22}z_{21} \leq 0.$$
From this, we obtain $z_{12}=0,$ and $z_{21}z_{22}=0,$ and hence ${z}_{ij}(\mathcal{M}\mathcal{Z})_{ij} = {0},$ for all $i,j \in [2]$. Thus $\mathcal{M}$ is $T$-column sufficient.
 \item[\rm(ii)] Let $\mathcal{Z} = \begin{pmatrix}
     2 & 0 \\
     0 & 0
 \end{pmatrix} \in \mathbb{R}^{[2,2]}.$ Then $\mathcal{M}\mathcal{Z} = \begin{pmatrix}
     0 & 2 \\
     0 & 0
 \end{pmatrix}.$ Thus, we obtain $z_{ij}(\mathcal{M}\mathcal{Z})_{ij} =0$, for all $i,j \in [2].$ Thus $\mathcal{M}$ is not $T$-$P$.
\end{itemize}
\end{example}

\begin{proposition}
 Any $k$-th sequential principal subtensor of a $T$-column sufficient tensor is $T$-column sufficient.  
\end{proposition}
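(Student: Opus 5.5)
The plan is the standard zero-padding argument used for principal submatrices of column sufficient matrices. Let $\mathcal{M}\in\mathbb{R}^{[2m,n]}$ be $T$-column sufficient and fix $k\in[n]$. Let $\mathcal{M}^{(k)}\in\mathbb{R}^{[2m,k]}$ be its $k$-th sequential principal subtensor, so $m^{(k)}_{i_1\ldots i_{2m}}=m_{i_1\ldots i_{2m}}$ for all $i_1,\ldots,i_{2m}\in[k]$. Take any $\mathcal{Y}=(y_{i_1\ldots i_m})\in\mathbb{R}^{[m,k]}$ satisfying
\[
y_{i_1\ldots i_m}(\mathcal{M}^{(k)}\mathcal{Y})_{i_1\ldots i_m}\le 0\quad\text{for all } i_1,\ldots,i_m\in[k].
\]
We must show that all these products vanish.

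Embed $\mathcal{Y}$ into $\mathbb{R}^{[m,n]}$ by zero padding. Define $\mathcal{Z}\in\mathbb{R}^{[m,n]}$ by $z_{i_1\ldots i_m}=y_{i_1\ldots i_m}$ if all $i_j\in[k]$, and $z_{i_1\ldots i_m}=0$ otherwise.

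The key computation is that $\mathcal{M}\mathcal{Z}$ agrees with $\mathcal{M}^{(k)}\mathcal{Y}$ on indices in $[k]^m$. Fix $i_1,\ldots,i_m\in[k]$. In
\[
(\mathcal{M}\mathcal{Z})_{i_1\ldots i_m}=\sum_{i_{m+1},\ldots,i_{2m}\in[n]} m_{i_1\ldots i_{2m}}\,z_{i_{m+1}\ldots i_{2m}},
\]
every term with some $i_{m+j}\notin[k]$ vanishes because $z=0$ there. The remaining sum runs over $[k]^m$, where all $2m$ indices lie in $[k]$, so $m_{i_1\ldots i_{2m}}=m^{(k)}_{i_1\ldots i_{2m}}$. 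Hence $(\mathcal{M}\mathcal{Z})_{i_1\ldots i_m}=(\mathcal{M}^{(k)}\mathcal{Y})_{i_1\ldots i_m}$.

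Consequently:
\begin{itemize}
\item for indices in $[k]^m$, $z_{i_1\ldots i_m}(\mathcal{M}\mathcal{Z})_{i_1\ldots i_m}=y_{i_1\ldots i_m}(\mathcal{M}^{(k)}\mathcal{Y})_{i_1\ldots i_m}\le 0$;
\item for indices outside $[k]^m$, $z_{i_1\ldots i_m}=0$, so the product is $0\le 0$, whatever the value of $(\mathcal{M}\mathcal{Z})_{i_1\ldots i_m}$.
\end{itemize}
So $\mathcal{Z}$ satisfies the hypothesis of Definition \ref{T-CSdefinition}(ii) for $\mathcal{M}$. Since $\mathcal{M}$ is $T$-column sufficient, all products $z_{i_1\ldots i_m}(\mathcal{M}\mathcal{Z})_{i_1\ldots i_m}$ are zero. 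Restricting to $[k]^m$ gives $y_{i_1\ldots i_m}(\mathcal{M}^{(k)}\mathcal{Y})_{i_1\ldots i_m}=0$ for all $i_j\in[k]$, so $\mathcal{M}^{(k)}$ is $T$-column sufficient.

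The only point needing care is the index bookkeeping in the contraction $\mathcal{M}\mathcal{Z}$ (the $J(m)=\{m+1,\ldots,2m\}$-mode product). One must confirm that padding kills exactly the terms involving indices outside $[k]$, and that the surviving entries of $\mathcal{M}$ have all $2m$ indices in $[k]$. Both facts are immediate from the definitions, so I expect no real obstacle.
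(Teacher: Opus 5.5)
Your proof is correct and is the same zero-padding argument the paper uses: extend the test tensor by zeros to $\mathbb{R}^{[m,n]}$, note that the componentwise products agree with those of $\mathcal{M}^{(k)}$ on $[k]^m$ and vanish elsewhere, and then apply $T$-column sufficiency of $\mathcal{M}$. Your version is slightly more careful than the paper's in three ways: you explicitly verify $(\mathcal{M}\mathcal{Z})_{i_1\ldots i_m}=(\mathcal{M}^{(k)}\mathcal{Y})_{i_1\ldots i_m}$ on $[k]^m$, you treat indices outside $[k]^m$ separately, and you use the correct orders $\mathbb{R}^{[2m,n]}$ and $\mathbb{R}^{[2m,k]}$.
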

\begin{proof}
Let $\mathcal{M}^{(k)} = (m_{i_{1}i_{2}...i_{m}}^{(k)}) \in \mathbb{R}^{[m,k]}$ be a $k$-th sequential principal subtensor of a $T$-column sufficient tensor $\mathcal{M} = (m_{i_{1}i_{2}...i_{m}}) \in \mathbb{R}^{[m,n]},$ for any $k \in [n].$ Then, we have $m_{i_{1}i_{2}...i_{m}}^{(k)}=m_{i_{1}i_{2}...i_{m}},$ for all $i_{1},i_{2},...,i_{m} \in [k].$ Let $\mathcal{Z}  \in \mathbb{R}^{[m,k]}$ satisfies 
\begin{equation}\label{sub1}
z_{i_{1}i_{2}...i_{m}}(\mathcal{M}^{(k)}\mathcal{Z})_{i_{1}i_{2}...i_{m}} \leq {0},~\text{for all}~ i_{1},i_{2},...,i_{m} \in [k]. 
\end{equation}
We define $\mathcal{Y} = (y_{i_{1}i_{2}...i_{m}} ) \in \mathbb{R}^{[m,n]}$ such that
\begin{equation*}
   y_{i_{1}i_{2}...i_{m}}  = \begin{cases}
       z_{i_{1}i_{2}..i_{m}},~\text{if}~ i_{1},i_{2},...,i_{m} \in [k],\\
       0,~ \text{else}.
   \end{cases}
\end{equation*}
By Equation (\ref{sub1}), for all $i_{1},i_{2},...,i_{m} \in [n],$ we have 
$$y_{i_{1}i_{2}...i_{m}}(\mathcal{M}\mathcal{Y})_{i_{1}i_{2}...i_{m}} = z_{i_{1}i_{2}...i_{m}}(\mathcal{M}^{(k)}\mathcal{Z})_{i_{1}i_{2}...i_{m}} \leq 0.$$
Since $\mathcal{M}$ is $T$-column sufficient, we obtain
$$z_{i_{1}i_{2}...i_{m}}(\mathcal{M}^{(k)}\mathcal{Z})_{i_{1}i_{2}...i_{m}} = y_{i_{1}i_{2}...i_{m}}(\mathcal{M}\mathcal{Y})_{i_{1}i_{2}...i_{m}} = 0,~\text{for all}~i_{1},i_{2},...,i_{m} \in [k].$$
This implies that $\mathcal{M}^{(k)}$ is $T$-column sufficient for any $k \in [n].$
\end{proof}

A well-known result in LCP \cite{MR3396730} states that a matrix is a $P$-matrix (a matrix with positive principal minors)  if and only if it is column sufficient and non-degenerate (a matrix with nonzero principal minors). This naturally raises the question of whether an analogous characterization holds in the tensor space $\mathbb{R}^{[m,n]}$. In particular, one may ask whether a tensor $\mathcal{M} \in \mathbb{R}^{[2m,n]}$ is a $T$-$P$ tensor if and only if it is both $T$-column sufficient and $T$-non-degenerate. In what follows, we answer this question in the affirmative. We begin by introducing the notion of a $T$-non-degenerate tensor on the space $\mathbb{R}^{[m,n]}.$

\begin{definition}\rm\label{T-ND}
A tensor $\mathcal{M} \in \mathbb{R}^{[2m,n]}$ is said to be a $T$-non-degenerate tensor if and only if for any $\mathcal{Z} \in \mathbb{R}^{[m,n]} \setminus \{\mathcal{O}\}$, there exist $i_{1},i_{2},...,i_{m} \in [n]$ such that
$$z_{i_{1}i_{2}...i_{m}}(\mathcal{M}\mathcal{Z})_{i_{1}i_{2}...i_{m}} \neq 0.$$
\end{definition}

\begin{remark}\rm
\begin{itemize}
\item[\rm(i)] It is evident from the above definition that a $T$-non-degenerate tensor is a generalization of a non-degenerate matrix \citep[Definition 3.6.1]{MR3396730}.
\item[\rm(ii)] A $T$-$P$ tensor is a $T$-non-degenerate tensor, but the converse is not true in general (see Example \ref{T-ND but not T-P}).
\iffalse
\item[\rm(iii)] A  $T$-non-degenerate tensor is $T$-$R_{0}$, {\color{blue} but the converse statement need not be true (see Example \ref{T-R0 but not T-ND})}.
\fi
\end{itemize}
\end{remark}

\begin{example}\rm\label{T-ND but not T-P}
Let $\mathcal{M} = (m_{i_{1}i_{2}i_{3}i_{4}}) \in \mathbb{R}^{[4,2]},$ where $m_{1111} = m_{1212} = m_{2121}= m_{2222} =1, m_{1122}= m_{1221}=-1, m_{2112}= m_{2211}=-2$ and other entries be zero. We claim that \rm(i) $\mathcal{M}$ is $T$-non-degenerate,  \rm(ii) but not $T$-$P$. For any $\mathcal{Z} \in \mathbb{R}^{[2,2]}$, we have 
\begin{equation*}
\mathcal{M}\mathcal{Z} =   \begin{pmatrix}
z_{11}-z_{22} & z_{12}-z_{21}\\
z_{21}-2z_{12} & z_{22}-2z_{11}
\end{pmatrix}.
\end{equation*} 
\begin{itemize}
\item[\rm(i)] Let $\mathcal{Z} \in \mathbb{R}^{[2,2]} \setminus \{\mathcal{O}\}.$ Then we have the following cases:
\begin{itemize}
\item[\rm(a)] If exactly one of the entries of $\mathcal{Z}$ is nonzero, say, $z_{kl} \neq 0,$ for some $k,l \in [2]$ then $z_{kl}(\mathcal{M}\mathcal{Z})_{kl} = z_{kl}^{2} \neq 0.$
\item[\rm(ii)] If at least two of the entries of $\mathcal{Z}$ are nonzero, then due to the entries of $\mathcal{M}\mathcal{Z}$, we can see that there must exist at least one  $(i,j) \in [2] \times [2]$ such that $z_{ij}(\mathcal{M}\mathcal{Z})_{ij} \neq 0.$ 
\end{itemize}
From the above cases, we can conclude that $\mathcal{M}$ is a $T$-non-degenerate tensor.
\item[\rm(ii)] Let $\mathcal{Z} = \begin{pmatrix}
    1 & 1 \\
     1 & 1
 \end{pmatrix} \in \mathbb{R}^{[2,2]}.$ Then $\mathcal{M}\mathcal{Z} = \begin{pmatrix}
     0 & 0 \\
     -1 & -1
 \end{pmatrix}.$ Therefore, we obtain $z_{ij}(\mathcal{M}\mathcal{Z})_{ij} \\\leq 0$, for all $i,j \in [2].$ Thus $\mathcal{M}$ is not $T$-$P$. 
\end{itemize}
\end{example}

\iffalse
\begin{example}\rm\label{T-R0 but not T-ND}
  aa  
\end{example}
\fi

We now provide an equivalent condition for a $T$-$P$ tensor to be a $T$-column sufficient tensor.

\begin{theorem}
 Let $\mathcal{M} \in \mathbb{R}^{[2m,n]}.$  The following statements are equivalent.
 \begin{itemize}
     \item[\rm(i)]  $\mathcal{M}$ is a $T$-$P$ tensor.
     \item[\rm(ii)] $\mathcal{M}$ is $T$-column sufficient and $T$-non-degenerate.
 \end{itemize}
\end{theorem}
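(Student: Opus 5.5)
The argument is a direct unwinding of the three definitions. The essential observation is that, for a fixed $\mathcal{Z}$, the $n^m$ products $z_{i_1\ldots i_m}(\mathcal{M}\mathcal{Z})_{i_1\ldots i_m}$ fall into exactly one of three mutually exclusive situations. Either (a) at least one product is positive; or (b) all products are $\le 0$ and at least one is negative; or (c) all products vanish. The $T$-$P$ property says that every nonzero $\mathcal{Z}$ is in situation (a). $T$-column sufficiency rules out (b) for all $\mathcal{Z}$. $T$-non-degeneracy rules out (c) for every nonzero $\mathcal{Z}$.

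For (i)$\Rightarrow$(ii), $T$-column sufficiency has already been established in Proposition \ref{T-P is T-CS}(ii), so it need not be reproved. For $T$-non-degeneracy, take any $\mathcal{Z}\in\mathbb{R}^{[m,n]}\setminus\{\mathcal{O}\}$. The $T$-$P$ property supplies indices with $z_{i_1\ldots i_m}(\mathcal{M}\mathcal{Z})_{i_1\ldots i_m}>0$, and in particular this product is $\neq 0$, which is exactly Definition \ref{T-ND}. This matches part (ii) of the remark following Definition \ref{T-ND}.

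For (ii)$\Rightarrow$(i), I would argue by contradiction. Suppose $\mathcal{M}$ is $T$-column sufficient and $T$-non-degenerate but not $T$-$P$. Then there is some $\mathcal{Z}\neq\mathcal{O}$ with $z_{i_1\ldots i_m}(\mathcal{M}\mathcal{Z})_{i_1\ldots i_m}\le 0$ for all $i_1,\ldots,i_m\in[n]$. Applying Definition \ref{T-CSdefinition}(ii) to this $\mathcal{Z}$ upgrades every inequality to an equality, so $z_{i_1\ldots i_m}(\mathcal{M}\mathcal{Z})_{i_1\ldots i_m}=0$ for all indices. Since $\mathcal{Z}\neq\mathcal{O}$, this contradicts $T$-non-degeneracy, and hence $\mathcal{M}$ is $T$-$P$.

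I do not expect a genuine obstacle, because the proof is purely logical and uses no structure of the contraction product beyond the definitions. The only point needing care is the negation of the $T$-$P$ condition: the failure must be phrased for a nonzero $\mathcal{Z}$ and must give nonpositivity of all products simultaneously, so that the hypothesis of column sufficiency applies verbatim.
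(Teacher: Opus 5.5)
Your proof is correct and is essentially the paper's own argument. For (i)$\Rightarrow$(ii) you cite the earlier proposition for $T$-column sufficiency and note that a positive product is nonzero. For (ii)$\Rightarrow$(i) you argue by contradiction: column sufficiency forces all the nonpositive products of a nonzero $\mathcal{Z}$ to vanish, which contradicts $T$-non-degeneracy.
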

\begin{proof}
$\rm(i) \implies \rm(ii):$ Let $\mathcal{M}$ be a $T$-$P$ tensor. From the Definition \ref{T-ND} and item (ii) in Proposition  \ref{T-P is T-CS}, it implies that $\mathcal{M}$ is $T$-column sufficient and $T$-non-degenerate.\\
$\rm(ii) \implies \rm(i):$ Let $\mathcal{M}$ be $T$-column sufficient and $T$-non-degenerate. Assume contrary that $\mathcal{M}$ is not $T$-$P$. So there exists $\mathcal{Z} \in \mathbb{R}^{[m,n]} \setminus \{\mathcal{O}\}$ such that
$${z}_{i_{1}i_{2}...i_{m}}(\mathcal{M}\mathcal{Z})_{i_{1}i_{2}...i_{m}}
\leq {0},~{\forall}~ i_{1},i_{2},...,i_{m} \in [n].$$
Since $\mathcal{M}$ is $T$-column sufficient, the above equation yields 
\begin{equation}\label{eeqq}
{z}_{i_{1}i_{2}...i_{m}}(\mathcal{M}\mathcal{Z})_{i_{1}i_{2}...i_{m}}
= {0},~{\forall}~ i_{1},i_{2},...,i_{m} \in [n].
\end{equation}
Therefore there exists a nonzero tensor $\mathcal{Z} \in \mathbb{R}^{[m,n]}$ such that Equation (\ref{eeqq}) holds, which contradicts the fact that $\mathcal{M}$ is a $T$-non-degenerate tensor. Hence $\mathcal{M}$ must be a $T$-$P$ tensor.
\end{proof}

\section{Properties of the Solution Set of the \texorpdfstring{$\mathrm{TLCP}(\mathcal{M},\mathcal{Q})$}{TLCP(M,Q)}}\label{Properties}
In this section, we study properties of the solution set of the $\mathrm{TLCP}(\mathcal{M},\mathcal{Q})$, including convexity, uniqueness, and feasibility implying solvability, for $T$-column sufficient tensors.
\subsection{Convexity}\label{Convexity}
In the following, we prove that the convexity of  $\mathrm{SOL}(\mathcal{M},\mathcal{Q})$ is equivalent to $\mathcal{M}$ being a $T$-column sufficient tensor.

\begin{theorem}\label{T-CSimplyconvex}
Given $\mathcal{M} \in \mathbb{R}^{[2m,n]}$, the following statements are equivalent.
\begin{enumerate}
    \item[\rm(i)] $\mathcal{M}$ is $T$-column sufficient.
      \item[\rm(ii)] The set $\mathrm{SOL}(\mathcal{M},\mathcal{Q})$ is a convex set for all $\mathcal{Q} \in \mathbb{R}^{[m,n]}.$
\end{enumerate}
\end{theorem}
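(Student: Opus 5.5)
The plan is to mirror the classical proof for column sufficient matrices (Cottle--Pang--Stone, Theorem 3.5.8). The argument is entirely componentwise, with multi-indices $(i_1,\ldots,i_m)$ playing the role of vector indices. Linearity of $\mathcal{Z}\mapsto\mathcal{M}\mathcal{Z}$ (Lemma 2.1 of \cite{Shang19062025}) is the only structural fact needed. Throughout, I read the complementarity condition as $\mathcal{Z}\ge\mathcal{O}$, $\mathcal{M}\mathcal{Z}+\mathcal{Q}\ge\mathcal{O}$, $\langle\mathcal{Z},\mathcal{M}\mathcal{Z}+\mathcal{Q}\rangle=0$. I will use Proposition \ref{convexity} for the characterization of convexity.

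\emph{(i) $\Rightarrow$ (ii).} Fix $\mathcal{Q}$ and take $\mathcal{Z}^1,\mathcal{Z}^2\in\mathrm{SOL}(\mathcal{M},\mathcal{Q})$. Set $\mathcal{W}^k=\mathcal{M}\mathcal{Z}^k+\mathcal{Q}$. Since all entries are nonnegative and the inner products vanish, we have $z^k_{\alpha}w^k_{\alpha}=0$ for every multi-index $\alpha$. Put $\mathcal{D}=\mathcal{Z}^1-\mathcal{Z}^2$, so that $\mathcal{M}\mathcal{D}=\mathcal{W}^1-\mathcal{W}^2$. Then
\[
d_\alpha(\mathcal{M}\mathcal{D})_\alpha=(z^1_\alpha-z^2_\alpha)(w^1_\alpha-w^2_\alpha)=-z^1_\alpha w^2_\alpha-z^2_\alpha w^1_\alpha\le 0
\]
for all $\alpha$. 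By $T$-column sufficiency, every such product is $0$. Since both terms $z^1_\alpha w^2_\alpha$ and $z^2_\alpha w^1_\alpha$ are nonnegative, each must vanish. Summing over $\alpha$ gives $\langle\mathcal{Z}^1,\mathcal{M}\mathcal{Z}^2+\mathcal{Q}\rangle=\langle\mathcal{Z}^2,\mathcal{M}\mathcal{Z}^1+\mathcal{Q}\rangle=0$, and Proposition \ref{convexity} yields convexity. (A direct check, that $\lambda\mathcal{Z}^1+(1-\lambda)\mathcal{Z}^2$ is feasible and complementary by expanding the bilinear form, would also work.)

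\emph{(ii) $\Rightarrow$ (i).} I argue by contraposition, and I expect the construction of $\mathcal{Q}$ here to be the main step. Suppose $\mathcal{Z}$ satisfies $z_\alpha(\mathcal{M}\mathcal{Z})_\alpha\le0$ for all $\alpha$, with strict inequality at some index $\beta$. Form $\mathcal{Z}^1=\mathcal{Z}^+$ and $\mathcal{Z}^2=\mathcal{Z}^-$, so that $\mathcal{Z}=\mathcal{Z}^1-\mathcal{Z}^2$, and let $\mathcal{Y}=\mathcal{M}\mathcal{Z}$. Define $\mathcal{Q}$ so that $\mathcal{W}^1=\mathcal{M}\mathcal{Z}^1+\mathcal{Q}=\mathcal{Y}^-$ and $\mathcal{W}^2=\mathcal{M}\mathcal{Z}^2+\mathcal{Q}=\mathcal{Y}^+$. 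These two requirements are consistent because $\mathcal{W}^1-\mathcal{W}^2=-\mathcal{Y}=-\mathcal{M}(\mathcal{Z}^1-\mathcal{Z}^2)$ appears with a sign issue. Tracking the signs carefully, the correct choice is $\mathcal{Q}:=\mathcal{Y}^+-\mathcal{M}\mathcal{Z}^1$. This gives $\mathcal{W}^1=\mathcal{Y}^+$ and $\mathcal{W}^2=\mathcal{Y}^+-\mathcal{M}\mathcal{Z}=\mathcal{Y}^+-\mathcal{Y}=\mathcal{Y}^-$.

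It then remains to check the complementarity conditions. For $\mathcal{Z}^1$: wherever $z_\alpha>0$, the sign condition forces $y_\alpha\le0$, so $(\mathcal{Y}^+)_\alpha=0$. For $\mathcal{Z}^2$: wherever $z_\alpha<0$, we get $y_\alpha\ge0$, so $(\mathcal{Y}^-)_\alpha=0$. Hence $\mathcal{Z}^1,\mathcal{Z}^2\in\mathrm{SOL}(\mathcal{M},\mathcal{Q})$. Finally, consider the cross terms at $\beta$. If $z_\beta>0$, then $y_\beta<0$, so $\langle\mathcal{Z}^1,\mathcal{M}\mathcal{Z}^2+\mathcal{Q}\rangle\ge z^1_\beta(\mathcal{Y}^-)_\beta>0$. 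If $z_\beta<0$, then $y_\beta>0$, so $\langle\mathcal{Z}^2,\mathcal{M}\mathcal{Z}^1+\mathcal{Q}\rangle\ge z^2_\beta(\mathcal{Y}^+)_\beta>0$. Either way, condition (\ref{cross commutative}) fails, and by Proposition \ref{convexity} the set $\mathrm{SOL}(\mathcal{M},\mathcal{Q})$ is not convex, contradicting (ii).
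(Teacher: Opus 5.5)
Your proof is correct and follows the paper's argument essentially verbatim. For (i)$\Rightarrow$(ii) you use the same componentwise expansion of $(\mathcal{Z}^1-\mathcal{Z}^2)_\alpha(\mathcal{M}(\mathcal{Z}^1-\mathcal{Z}^2))_\alpha$ together with Proposition~\ref{convexity}. For the converse you make the same choice $\mathcal{Q}=\mathcal{Y}^+-\mathcal{M}\mathcal{Z}^+=\mathcal{Y}^--\mathcal{M}\mathcal{Z}^-$ with $\mathcal{Z}^+,\mathcal{Z}^-$ as the two solutions whose cross terms fail to vanish.

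One thing to clean up: delete your first requirement $\mathcal{W}^1=\mathcal{Y}^-$, $\mathcal{W}^2=\mathcal{Y}^+$. It is inconsistent, since $\mathcal{W}^1-\mathcal{W}^2=\mathcal{M}\mathcal{Z}=\mathcal{Y}$. Keep only the corrected assignment $\mathcal{W}^1=\mathcal{Y}^+$, $\mathcal{W}^2=\mathcal{Y}^-$.
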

\begin{proof}
$\rm(i) \implies \rm(ii)$: Let $\mathcal{M} \in \mathbb{R}^{[2m,n]}$ be $T$-column sufficient. Let $\mathcal{Z}^{1}$ and $\mathcal{Z}^{2}$ be any two solutions of the $\mathrm{TLCP}(\mathcal{M},\mathcal{Q})$. This implies
\begin{equation}\label{Z^1}
\mathcal{Z}^{1} \geq \mathcal{O},~\mathcal{M}\mathcal{Z}^{1}+\mathcal{Q} \geq \mathcal{O}~\text{and}~\langle \mathcal{Z}^{1},\mathcal{M}\mathcal{Z}^{1}+\mathcal{Q}\rangle =0,~~\text{and}  
\end{equation}
\begin{equation}\label{Z^2}
\mathcal{Z}^{2} \geq \mathcal{O},~\mathcal{M}\mathcal{Z}^{2}+\mathcal{Q} \geq \mathcal{O}~\text{and}~\langle \mathcal{Z}^{2},\mathcal{M}\mathcal{Z}^{2}+\mathcal{Q}\rangle =0.
\end{equation}
To prove the convexity of $\mathrm{SOL}(\mathcal{M},\mathcal{Q})$, it suffices to show that for $\mathcal{Z}^{1}$  and $\mathcal{Z}^{2}$, Equation (\ref{cross commutative}) holds. Note that for each $i_{1},i_{2},...,i_{m}$ in $[n]$,  we have
\begin{eqnarray}\label{sol}
(\mathcal{Z}^{1}-\mathcal{Z}^{2})_{i_{1}i_{2}...i_{m}}(\mathcal{M}(\mathcal{Z}^{1}-\mathcal{Z}^{2}))_{i_{1}i_{2}...i_{m}} = {z}^{1}_{i_{1}i_{2}...i_{m}}(\mathcal{M}\mathcal{Z}^{1}+\mathcal{Q})_{i_{1}i_{2}...i_{m}}+ \notag\\
{z}^{2}_{i_{1}i_{2}...i_{m}}(\mathcal{M}\mathcal{Z}^{2}+\mathcal{Q})_{i_{1}i_{2}...i_{m}}-\notag\\
~~~~~~~~~~~~~~~~~~~\big[{z}^{1}_{i_{1}i_{2}...i_{m}}(\mathcal{M}\mathcal{Z}^{2}+\mathcal{Q})_{i_{1}i_{2}...i_{m}}+\notag\\{z}^{2}_{i_{1}i_{2}...i_{m}}(\mathcal{M}\mathcal{Z}^{1}+\mathcal{Q})_{i_{1}i_{2}...i_{m}}\big]
\end{eqnarray}
From Equations (\ref{Z^1}) and (\ref{Z^2}), we get $(\mathcal{Z}^{1}-\mathcal{Z}^{2})_{i_{1}i_{2}...i_{m}}(\mathcal{M}(\mathcal{Z}^{1}-\mathcal{Z}^{2}))_{i_{1}i_{2}...i_{m}} \leq {0},$ for all ${i_{1},i_{2},...,i_{m}} \in [n].$ Due to  $\mathcal{M}$ being $T$-column sufficient, this implies that $(\mathcal{Z}^{1}-\mathcal{Z}^{2})_{i_{1}i_{2}...i_{m}}(\mathcal{M}(\mathcal{Z}^{1}-\mathcal{Z}^{2}))_{i_{1}i_{2}...i_{m}} = {0},$ for all ${i_{1},i_{2},...,i_{m}} \in [n].$  Thus from Equation (\ref{sol}), we get \begin{equation}\label{nonnegative}
{z}^{1}_{i_{1}i_{2}...i_{m}}(\mathcal{M}\mathcal{Z}^{2}+\mathcal{Q})_{i_{1}i_{2}...i_{m}}+{z}^{2}_{i_{1}i_{2}...i_{m}}(\mathcal{M}\mathcal{Z}^{1}+\mathcal{Q})_{i_{1}i_{2}...i_{m}} =0,~\forall~ i_{1},i_{2},...i_{m} \in [n]. 
\end{equation}
Due to the nonnegativity of each term in Equation (\ref{nonnegative}), we have
\begin{equation*}
    \langle \mathcal{Z}^{1}, \mathcal{M}\mathcal{Z}^{2}+\mathcal{Q}\rangle = \langle \mathcal{Z}^{1}, \mathcal{M}\mathcal{Z}^{2}+\mathcal{Q}\rangle = 0.
\end{equation*}
From Proposition \ref{convexity}, it follows that $\mathrm{SOL}(\mathcal{M},\mathcal{Q})$ is a convex set for any $\mathcal{Q} \in \mathbb{R}^{[m,n]}.$\\
$\rm(ii) \implies \rm(i):$ Assume contrary that $\mathcal{M}$ is not $T$-column sufficient. This implies that there exists a tensor $\mathcal{Z} \in \mathbb{R}^{[m,n]}$ such that ${z}_{i_{1}i_{2},...i_{m}}(\mathcal{M}\mathcal{Z})_{i_{1}i_{2}...i_{m}} \leq {0}$ for all $i_{1},i_{2},...,i_{m}$ in $[n]$, with strict inequality holding for some indices, say $j_{1},j_{2},...,j_{m}$ in $[n]$. Let $\mathcal{X}^{1} = \mathcal{Z}^{+},$ $\mathcal{X}^{2}=\mathcal{Z}^{-}$, $\mathcal{Y}^{+} = \max\{\mathcal{M}\mathcal{Z},\mathcal{O}\},$ and $\mathcal{Y}^{-} = \max\{-\mathcal{M}\mathcal{Z},\mathcal{O}\}.$ Define 
\begin{equation*}
    \mathcal{Q} = \mathcal{Y}^{+}-\mathcal{M}\mathcal{Z}^{+} = \mathcal{Y}^{-}-\mathcal{M}\mathcal{Z}^{-}.
\end{equation*}
If ${z}_{i_{1}i_{2}...i_{m}}>0$, then $(\mathcal{M}\mathcal{Z})_{i_{1}i_{2}...i_{m}} \leq {0},$ and hence $(\mathcal{Y}^{+})_{i_{1}i_{2}...i_{m}} =0.$ Therefore, we have $(\mathcal{X}^{1})_{i_{1}i_{2}...i_{m}}(\mathcal{Y}^{+})_{i_{1}i_{2}...i_{m}} =0,$ which implies $\mathcal{X}^{1} \in \mathrm{SOL}(\mathcal{M},\mathcal{Q}).$ Similarly, we can show that if ${z}_{i_{1}i_{2}...i_{m}}<0$, then $\mathcal{X}^{2} \in \mathrm{SOL}(\mathcal{M},\mathcal{Q}).$ Thus $\mathcal{X}^{1}$ and $\mathcal{X}^{2}$ are two distinct solutions of the $\mathrm{TLCP}(\mathcal{M},\mathcal{Q}).$ Note that for the indices $j_{1},j_{2},...,j_{m} \in [n]$, we have ${z}_{j_{1}j_{2},...j_{m}}(\mathcal{M}\mathcal{Z})_{j_{1}j_{2}...j_{m}} < {0}$. Then, depending upon whether ${z}_{j_{1}j_{2},...j_{m}}>0$ or ${z}_{j_{1}j_{2},...j_{m}}$ $<0$, we can show that either $(\mathcal{X}^{1})_{j_{1}j_{2}...j_{m}}(\mathcal{Y}^{-})_{j_{1}j_{2}...j_{m}}>0$ or $(\mathcal{X}^{2})_{j_{1}j_{2}...j_{m}}(\mathcal{Y}^{+})_{j_{1}j_{2}...j_{m}}>0.$ This implies that either $\langle \mathcal{X}^{1}, \mathcal{M}\mathcal{Z}^{-}+\mathcal{Q}\rangle \neq 0$ or $\langle \mathcal{X}^{2}, \mathcal{M}\mathcal{Z}^{+}+\mathcal{Q}\rangle \neq 0,$ which is a contradiction to the convexity of $\mathrm{SOL}(\mathcal{M},\mathcal{Q})$. Hence $M$ is a $T$-column sufficient tensor.
\end{proof}

\begin{remark}\rm
If $m=1$, then the Theorem \ref{T-CSimplyconvex} reduces to \citep[Theorem 3.5.8]{MR3396730}.   
\end{remark}

\subsection{Feasibility implies Solvability}\label{FEASOL}
The following theorem presents a sufficient condition guaranteeing that the $\mathrm{TLCP}\\(\mathcal{M},\mathcal{Q})$ is solvable whenever it is feasible.

\begin{theorem}\label{feasimplysol}
 Let $\mathcal{M} \in \mathbb{R}^{[2m,n]}$ be a block symmetric and  $T$-column sufficient tensor. If the $\mathrm{TLCP}(\mathcal{M},\mathcal{Q})$ is feasible, then it is solvable.   
\end{theorem}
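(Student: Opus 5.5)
The plan follows the classical argument for column sufficient matrices (Cottle--Pang--Stone), with Lemma \ref{QPP1} as the main tool. Since $\mathcal{M}$ is block symmetric and $\mathrm{TLCP}(\mathcal{M},\mathcal{Q})$ is feasible, Lemma \ref{QPP1} gives an optimal solution $\mathcal{Z}^{*}$ of the quadratic program (\ref{QPP}). It also gives a multiplier $\mathcal{U}^{*}$ satisfying (\ref{KKT1})--(\ref{KKT4}) together with the sign condition (\ref{KKT5}):
\[
(\mathcal{Z}^{*}-\mathcal{U}^{*})_{i_{1}\ldots i_{m}}\big[\mathcal{M}(\mathcal{Z}^{*}-\mathcal{U}^{*})\big]_{i_{1}\ldots i_{m}}\leq 0 \quad \text{for all } i_{1},\ldots,i_{m}\in[n].
\]
This is exactly the hypothesis in the definition of a $T$-column sufficient tensor, applied to $\mathcal{W}:=\mathcal{Z}^{*}-\mathcal{U}^{*}$. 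Therefore $w_{i_{1}\ldots i_{m}}(\mathcal{M}\mathcal{W})_{i_{1}\ldots i_{m}}=0$ for every index.

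The goal is then to show $\langle \mathcal{Z}^{*},\mathcal{M}\mathcal{Z}^{*}+\mathcal{Q}\rangle=0$. By (\ref{KKT4}), $\mathcal{Z}^{*}$ is feasible, so this makes $\mathcal{Z}^{*}$ a solution. The work is done componentwise. Rewrite (\ref{KKT1}) as $\mathcal{M}\mathcal{Z}^{*}+\mathcal{Q}+\mathcal{M}\mathcal{W}\geq\mathcal{O}$. From (\ref{KKT2}) and the nonnegativity of every term, for each index
\[
z^{*}_{i}\big[(\mathcal{M}\mathcal{Z}^{*}+\mathcal{Q})_{i}+(\mathcal{M}\mathcal{W})_{i}\big]=0,
\]
writing $i$ for a multi-index. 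From (\ref{KKT3}), likewise $u^{*}_{i}(\mathcal{M}\mathcal{Z}^{*}+\mathcal{Q})_{i}=0$ for each $i$. Multiplying the first identity by $z^{*}_{i}$ and the second by $u^{*}_{i}$ and subtracting gives
\[
z^{*}_{i}(\mathcal{M}\mathcal{Z}^{*}+\mathcal{Q})_{i}\,(z^{*}_{i}-u^{*}_{i}) = -\,z^{*}_{i}\,w_{i}(\mathcal{M}\mathcal{W})_{i} = 0 .
\]
An easier route is to expand $w_{i}(\mathcal{M}\mathcal{W})_{i}=0$ directly. Using $z^{*}_{i}(\mathcal{M}\mathcal{W})_{i}=-z^{*}_{i}(\mathcal{M}\mathcal{Z}^{*}+\mathcal{Q})_{i}$ gives
\[
0=w_{i}(\mathcal{M}\mathcal{W})_{i}=-z^{*}_{i}(\mathcal{M}\mathcal{Z}^{*}+\mathcal{Q})_{i}-u^{*}_{i}(\mathcal{M}\mathcal{W})_{i}.
\]
Since $u^{*}_{i}\geq 0$ and $(\mathcal{M}\mathcal{W})_{i}\geq-(\mathcal{M}\mathcal{Z}^{*}+\mathcal{Q})_{i}$ by (\ref{KKT1}),
\[
u^{*}_{i}(\mathcal{M}\mathcal{W})_{i}\geq -u^{*}_{i}(\mathcal{M}\mathcal{Z}^{*}+\mathcal{Q})_{i}=0
\]
by (\ref{KKT3}). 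Hence $z^{*}_{i}(\mathcal{M}\mathcal{Z}^{*}+\mathcal{Q})_{i}=-u^{*}_{i}(\mathcal{M}\mathcal{W})_{i}\leq 0$. This quantity is also nonnegative by (\ref{KKT4}), so it is zero for every index. Summing over all indices gives the complementarity, so $\mathcal{Z}^{*}\in\mathrm{SOL}(\mathcal{M},\mathcal{Q})$.

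The main obstacle is the sign bookkeeping in this last step. It needs the per-index complementarity obtained from (\ref{KKT2}) and (\ref{KKT3}), which relies on each summand being a product of nonnegative quantities, and it must use the lower bound from (\ref{KKT1}) in the correct direction. Everything else is a direct invocation of Lemma \ref{QPP1} and Definition \ref{T-CSdefinition}(ii).
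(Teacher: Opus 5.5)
Your proof is correct and is essentially the paper's argument: Lemma \ref{QPP1}, then $T$-column sufficiency applied to $\mathcal{Z}^{*}-\mathcal{U}^{*}$ through (\ref{KKT5}), then per-index bookkeeping with (\ref{KKT1})--(\ref{KKT4}). The paper phrases the last step as $z^{*}_{i}\big(\mathcal{M}(\mathcal{Z}^{*}-\mathcal{U}^{*})\big)_{i}\leq 0$ and $-u^{*}_{i}\big(\mathcal{M}(\mathcal{Z}^{*}-\mathcal{U}^{*})\big)_{i}\leq 0$ summing to zero, which is the same as your $z^{*}_{i}(\mathcal{M}\mathcal{Z}^{*}+\mathcal{Q})_{i}=-u^{*}_{i}\big(\mathcal{M}(\mathcal{Z}^{*}-\mathcal{U}^{*})\big)_{i}\leq 0$; the ``multiply and subtract'' detour before your ``easier route'' is unnecessary (its identity simply comes from multiplying the first identity by $z^{*}_{i}-u^{*}_{i}$, and it does not by itself finish the proof), so you can drop it.
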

\begin{proof}
Given that $\mathcal{M} \in \mathbb{R}^{[2m,n]}$ is a block symmetric tensor and the $\mathrm{TLCP}(\mathcal{M},\mathcal{Q})$ is feasible. By Lemma (\ref{QPP1}), there exist tensors $\mathcal{Z}^{*}$ and $\mathcal{U}^{*}$ in $\mathbb{R}^{[m,n]}$ such that the conditions in Equations (\ref{KKT1})-(\ref{KKT5}) hold. Since $\mathcal{M}$ is $T$-column sufficient, from Equation (\ref{KKT5}), we get
\begin{equation}\label{QPP11}
   (\mathcal{Z}^{*}-\mathcal{U}^{*})_{i_{1}i_{2}...i_{m}}\big[\mathcal{M}(\mathcal{Z}^{*}-\mathcal{U}^{*})\big]_{i_{1}i_{2}...i_{m}} = {0},~\text{for all}~ i_{1},i_{2},...,i_{m} \in [n].  
\end{equation}
From Equations (\ref{KKT2}) and (\ref{KKT4}), we obtain 
\begin{eqnarray}
&{z}^{*}_{i_{1}i_{2}...i_{m}}\!(\mathcal{M}\mathcal{Z}^{*}\!\!+\!\mathcal{Q})_{i_{1}i_{2}...i_{m}}\!\!\!
+\! {z}^{*}_{i_{1}i_{2}...i_{m}}(\mathcal{M}(\mathcal{Z}^{*}\!\!-\!\mathcal{U}^{*}))_{i_{1}i_{2}...i_{m}} = {0},\!\!~{\forall}~ \!i_{1},i_{2},...,i_{m}\!\in\! [n] \label{QP11}\\ 
& \implies {z}^{*}_{i_{1}i_{2}...i_{m}}(\mathcal{M}(\mathcal{Z}^{*}-\mathcal{U}^{*}))_{i_{1}i_{2}...i_{m}} \leq {0},~{\forall}~ i_{1},i_{2},...,i_{m} \in [n].
\label{QPP12}
\end{eqnarray}
Also Equations (\ref{KKT1}) and (\ref{KKT3}) gives
\begin{eqnarray}\label{QPP13}
&{u}^{*}_{i_{1}i_{2}...i_{m}}\!(\mathcal{M}\mathcal{Z}^{*}\!+\!\mathcal{Q})_{i_{1}i_{2}...i_{m}}\!+\! {u}^{*}_{i_{1}i_{2}...i_{m}}(\mathcal{M}(\mathcal{Z}^{*}-\mathcal{U}^{*}))_{i_{1}i_{2}...i_{m}} \geq {0},~{\forall}~ i_{1},i_{2},...,i_{m} \in [n] \notag \\ 
& \implies -{u}^{*}_{i_{1}i_{2}...i_{m}}(\mathcal{M}(\mathcal{Z}^{*}-\mathcal{U}^{*}))_{i_{1}i_{2}...i_{m}} \leq {0},~{\forall}~ i_{1},i_{2},...,i_{m} \in [n]. 
\end{eqnarray}
Combining Equations (\ref{QPP11}), (\ref{QPP12}), and (\ref{QPP13}), we obtain 
\begin{equation*}
 {z}^{*}_{i_{1}i_{2}...i_{m}}(\mathcal{M}(\mathcal{Z}^{*}-\mathcal{U}^{*}))_{i_{1}i_{2}...i_{m}} = {0},~{\forall}~ i_{1},i_{2},...,i_{m} \in [n].   
\end{equation*}
Therefore from Equation (\ref{QP11}), we obtain ${z}^{*}_{i_{1}i_{2}...i_{m}}\!(\mathcal{M}\mathcal{Z}^{*} +\mathcal{Q})_{i_{1}i_{2}...i_{m}} = 0$, for all $i_{1},i_{2},...,i_{m} \in [n]$. Hence, our result follows.
\end{proof}

\subsection{Uniqueness}\label{Uniqueness} 
In this subsection, we provide a sufficient condition for the uniqueness of the solution to the $\mathrm{TLCP}(\mathcal{M},\mathcal{Q})$.

\begin{theorem}\label{T-CSimplyT-SP}
Let $\mathcal{M} \in \mathbb{R}^{[2m,n]}$ be a $T$-column sufficient tensor on $\mathbb{R}^{[m,n]}_{+}$. Then $\mathcal{M}$ is $T$-semi-positive.
\end{theorem}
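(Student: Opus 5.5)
The plan is to argue by contradiction, working directly from Definition \ref{T-CSdefinition}(i) with $\mathbb{K}=\mathbb{R}^{[m,n]}_{+}$ and the definition of a $T$-semi-positive tensor. No earlier theorem is needed beyond these two definitions.

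Suppose $\mathcal{M}$ is $T$-column sufficient on $\mathbb{R}^{[m,n]}_{+}$ but not $T$-semi-positive. Then there is some $\mathcal{Z}\in\mathbb{R}^{[m,n]}_{+}\setminus\{\mathcal{O}\}$ such that, for every index tuple $(i_1,\ldots,i_m)$ with $z_{i_1\ldots i_m}>0$, we have $(\mathcal{M}\mathcal{Z})_{i_1\ldots i_m}<0$.

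The next step is to check the sign hypothesis of Definition \ref{T-CSdefinition}(i) componentwise. Because $\mathcal{Z}\ge\mathcal{O}$, each index satisfies either $z_{i_1\ldots i_m}=0$ or $z_{i_1\ldots i_m}>0$.
\begin{itemize}
\item If $z_{i_1\ldots i_m}=0$, the product $z_{i_1\ldots i_m}(\mathcal{M}\mathcal{Z})_{i_1\ldots i_m}$ is $0$.
\item If $z_{i_1\ldots i_m}>0$, the product is strictly negative by the choice of $\mathcal{Z}$.
\end{itemize}
Hence $z_{i_1\ldots i_m}(\mathcal{M}\mathcal{Z})_{i_1\ldots i_m}\le 0$ for all $i_j\in[n]$. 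Since $\mathcal{Z}\in\mathbb{R}^{[m,n]}_{+}$, $T$-column sufficiency on $\mathbb{R}^{[m,n]}_{+}$ then forces all these products to equal $0$.

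To close the argument, use $\mathcal{Z}\neq\mathcal{O}$ together with $\mathcal{Z}\ge\mathcal{O}$. These give at least one index $(j_1,\ldots,j_m)$ with $z_{j_1\ldots j_m}>0$. At that index the product is strictly negative, which contradicts its being $0$. So $\mathcal{M}$ is $T$-semi-positive.

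I do not expect a real obstacle here. The only point needing care is the negation of $T$-semi-positivity: it must be read as "for all indices with positive entry, $(\mathcal{M}\mathcal{Z})<0$", not as a statement about every index. With that reading, indices where $z=0$ contribute zero products automatically.
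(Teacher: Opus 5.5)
Your proof is correct and is essentially the paper's argument. Take $\mathcal{Z}\in\mathbb{R}^{[m,n]}_{+}\setminus\{\mathcal{O}\}$ with $(\mathcal{M}\mathcal{Z})_{i_1\ldots i_m}<0$ on its whole positive support; every product $z_{i_1\ldots i_m}(\mathcal{M}\mathcal{Z})_{i_1\ldots i_m}$ is then nonpositive and at least one is strictly negative, which contradicts $T$-column sufficiency on $\mathbb{R}^{[m,n]}_{+}$ (and you negated $T$-semi-positivity correctly).
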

\begin{proof}
Given that $\mathcal{M} \in \mathbb{R}^{[2m,n]}$ is $T$-column sufficient on $\mathbb{R}^{[m,n]}_{+}.$ This implies that for any $\mathcal{Z} \in \mathbb{R}^{[m,n]}_{+}$, we have
\begin{equation}\label{Proof1}
z_{i_{1}i_{2}...i_{m}}(\mathcal{M}\mathcal{Z})_{i_{1}i_{2}...i_{m}} \leq {0},~\forall~i_{j} \in [n] \implies z_{i_{1}i_{2}...i_{m}}(\mathcal{M}\mathcal{Z})_{i_{1}i_{2}...i_{m}} = {0},~\forall~i_{j} \in [n]. 
\end{equation}
Let $\mathcal{Z} \in \mathbb{R}^{[m,n]}_{+} \setminus \{\mathcal{O}\}$, and $I = \{(i_{1},i_{2},...,i_{m}) \in [n]^{m}: z_{i_{1}i_{2}...i_{m}} > {0}\}.$ We claim that there exists at least one $(i_{1},i_{2},...,i_{m}) \in I$ such that $(\mathcal{M}\mathcal{Z})_{i_{1}i_{2}...i_{m}} \geq 0.$ Assume contrary that for all $(i_{1},i_{2},...,i_{m}) \in I$, we have $(\mathcal{M}\mathcal{Z})_{i_{1}i_{2}...i_{m}} < 0.$ This gives
\begin{equation*}
z_{i_{1}i_{2}...i_{m}}(\mathcal{M}\mathcal{Z})_{i_{1}i_{2}...i_{m}}  \begin{cases}
    =0,~~\forall~(i_{1},i_{2},...,i_{m}) \notin I,\\<0,~~\forall~(i_{1},i_{2},...,i_{m}) \in I,
\end{cases}
\end{equation*}
which is a contradiction to Equation (\ref{Proof1}). Thus for any $\mathcal{Z} \in \mathbb{R}^{[m,n]}_{+} \setminus \{\mathcal{O}\}$, there exist indices $i_{1},i_{2},...,i_{m} \in [n]$  such that $z_{i_{1}i_{2}...i_{m}} > 0$ and $(\mathcal{M}\mathcal{Z})_{i_{1}i_{2}...i_{m}} \geq {0}.$ Hence $\mathcal{M}$ is $T$-semi-positive.
\end{proof}  

As a consequence of Theorems \ref{TSP} and \ref{T-CSimplyT-SP}, the following results are immediate.
\begin{theorem}\label{T-CCSimplyunique}
If $\mathcal{M} \in \mathbb{R}^{[2m,n]}$ is a $T$-column sufficient tensor on $\mathbb{R}^{[m,n]}_{+}$, then the $\mathrm{TLCP}(\mathcal{M},\mathcal{Q})$ has a unique solution for any $\mathcal{Q} \in \mathbb{R}^{[m,n]}_{++}.$
\end{theorem}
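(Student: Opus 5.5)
The plan is a two-line chaining of earlier results, as the sentence preceding the statement suggests. First, we invoke Theorem \ref{T-CSimplyT-SP}: since $\mathcal{M}$ is $T$-column sufficient on $\mathbb{R}^{[m,n]}_{+}$, it is $T$-semi-positive. Second, we apply the forward direction of Theorem \ref{TSP}: a $T$-semi-positive tensor has the property that $\mathrm{TLCP}(\mathcal{M},\mathcal{Q})$ has a unique solution for every $\mathcal{Q} \in \mathbb{R}^{[m,n]}_{++}$. Composing the two implications gives the statement.

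For robustness, I would also sketch why the conclusion holds directly, in case one wants to avoid relying on the black box of Theorem \ref{TSP}.

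\textbf{Existence.} For $\mathcal{Q} > \mathcal{O}$, the tensor $\mathcal{Z} = \mathcal{O}$ is a solution, since $\mathcal{M}\mathcal{O} + \mathcal{Q} = \mathcal{Q} \geq \mathcal{O}$ and the inner product vanishes.

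\textbf{Uniqueness.} Suppose $\mathcal{Z} \neq \mathcal{O}$ is another solution. By $T$-semi-positivity, there is an index with $z_{i_1\ldots i_m} > 0$ and $(\mathcal{M}\mathcal{Z})_{i_1\ldots i_m} \geq 0$. Then
\[
(\mathcal{M}\mathcal{Z}+\mathcal{Q})_{i_1\ldots i_m} \geq q_{i_1\ldots i_m} > 0 .
\]
Since every term $z_{j_1\ldots j_m}(\mathcal{M}\mathcal{Z}+\mathcal{Q})_{j_1\ldots j_m}$ is nonnegative and this one is positive, we get $\langle \mathcal{Z}, \mathcal{M}\mathcal{Z}+\mathcal{Q}\rangle > 0$. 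This contradicts complementarity, so $\mathcal{O}$ is the only solution.

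There is no genuine obstacle here. The only point to check is that the hypotheses match exactly: Theorem \ref{T-CSimplyT-SP} requires column sufficiency only on the nonnegative orthant, which is precisely what is assumed, and Theorem \ref{TSP} is applied with $\mathcal{Q}$ strictly positive.
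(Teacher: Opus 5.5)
Your proposal is correct and is the paper's argument: the paper obtains this result immediately by combining Theorem \ref{T-CSimplyT-SP} ($T$-column sufficiency on $\mathbb{R}^{[m,n]}_{+}$ implies $T$-semi-positivity) with the forward direction of Theorem \ref{TSP}. Your supplementary direct argument is also valid and just unpacks that forward direction: $\mathcal{Z}=\mathcal{O}$ gives existence, and for uniqueness any nonzero solution would produce a strictly positive term in $\langle \mathcal{Z}, \mathcal{M}\mathcal{Z}+\mathcal{Q}\rangle$.
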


\begin{corollary}
If $\mathcal{M} \in \mathbb{R}^{[2m,n]}$ is a $T$-column sufficient tensor, then the $\mathrm{TLCP}$ $(\mathcal{M},\mathcal{Q})$ has a unique solution for any $\mathcal{Q} \in \mathbb{R}^{[m,n]}_{++}.$   
\end{corollary}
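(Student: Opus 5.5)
The corollary follows by chaining earlier results. First I show that $\mathcal{M}$ is $T$-column sufficient on $\mathbb{R}^{[m,n]}_{+}$. By Definition \ref{T-CSdefinition}, $T$-column sufficiency requires the implication
\[
z_{i_{1}\ldots i_{m}}(\mathcal{M}\mathcal{Z})_{i_{1}\ldots i_{m}}\le 0 \ \text{for all indices}
\;\Longrightarrow\;
z_{i_{1}\ldots i_{m}}(\mathcal{M}\mathcal{Z})_{i_{1}\ldots i_{m}}= 0 \ \text{for all indices}
\]
to hold for every $\mathcal{Z}\in\mathbb{R}^{[m,n]}$. In particular it holds for every $\mathcal{Z}\in\mathbb{R}^{[m,n]}_{+}\subseteq\mathbb{R}^{[m,n]}$, which is exactly part (i) of the definition with $\mathbb{K}=\mathbb{R}^{[m,n]}_{+}$. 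The remark after Definition \ref{T-CSdefinition} already records this.

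With that in hand, I apply Theorem \ref{T-CCSimplyunique} directly: it gives that $\mathrm{TLCP}(\mathcal{M},\mathcal{Q})$ has a unique solution for every $\mathcal{Q}\in\mathbb{R}^{[m,n]}_{++}$. Equivalently, one can go through Theorem \ref{T-CSimplyT-SP} to see that $\mathcal{M}$ is $T$-semi-positive, and then invoke the "only if" direction of Theorem \ref{TSP}.

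The argument is purely a chaining of inclusions, so I expect no real obstacle. The only point to check is that restricting the quantifier from $\mathbb{R}^{[m,n]}$ to the subset $\mathbb{R}^{[m,n]}_{+}$ preserves the implication, and it does, because the defining condition is a universally quantified statement over $\mathcal{Z}$.
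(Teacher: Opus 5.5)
Your proposal is correct and follows the paper's route: restricting the universally quantified definition to $\mathbb{R}^{[m,n]}_{+}$ shows that $\mathcal{M}$ is $T$-column sufficient on the nonnegative orthant. Theorem~\ref{T-CCSimplyunique}, which is Theorem~\ref{T-CSimplyT-SP} combined with the ``only if'' direction of Theorem~\ref{TSP}, then gives uniqueness for every $\mathcal{Q}\in\mathbb{R}^{[m,n]}_{++}$.
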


\begin{remark}\rm
 Theorem \ref{T-CCSimplyunique} differs from \citep[Theorem 14]{MR3778365} in the sense that it holds over $\mathbb{R}^{[m,n]}$. 
\end{remark}

\section{Conclusions}\label{conclusions}
In this paper, we introduced $T$-column sufficient tensors and studied their properties and relationships with other structured tensors, including $T$-positive semidefinite, $T$-$P$, and $T$-copositive tensors. We showed that the solution set of the $\mathrm{TLCP}(\mathcal{M},\mathcal{Q})$ is convex if and only if the tensor $\mathcal{M}$ is $T$-column sufficient. Furthermore, sufficient conditions were established for the uniqueness of solutions to the $\mathrm{TLCP}(\mathcal{M},\mathcal{Q})$ when $\mathcal{Q} > \mathcal{O}$, as well as for feasibility implying solvability via block symmetric tensors.

\section*{Acknowledgements}
Sonali Sharma  acknowledges the financial support from IIT Madras  under the institute post-doctoral fellowship (Award No.: F.Acad/IPDF/R12/2025). The third author's work is supported by the National Science and Technology Council, Taiwan (NSTC 114-2115-M-003-00-MY2; NSTC 114-2124-M-003-004) and the NTNU Higher Education Sprout Project - Center for Optimal Intelligent Data Analytics and Prediction.

\vspace{0.5cm}
\noindent{\bf{Data Availability}} This paper contains all the relevant data.
\section*{Declarations}
\textbf{Conflict of Interest} The authors have no competing interests to declare that are relevant to the content of this article.

\bibliographystyle{abbrv}
\bibliography{referencesall}

@article{LI2026116873,
  author    = {Xue-Liu Li and Guo-Ji Tang},
  title     = {Finiteness properties of the solution sets for horizontal tensor complementarity problems via structured tensor pair},
  journal   = {J. Comput. Appl. Math.},
FJOURNAL = {Journal of Computational and Applied Mathematics},
  volume    = {473},
  pages     = {116873},
  year      = {2026}
}

@article{li2025extended,
author = {Li, Xue-Liu and Jiang, Yi-Rong and Yuning Yang and Tang, Guo-Ji},
title = {Extended vertical tensor complementarity problems with finite solution sets},
journal = {J. Global Optim.},
fjournal={Journal of Global Optimization},
volume={92},
number={},
pages={431--452},
year = {2025}
}

@article{sharma2025extended,
  author    = {Sonali Sharma and V. Vetrivel},
  title     = {Extended horizontal tensor complementarity problems},
  journal   = {J. Optim. Theory Appl.},
 FJOURNAL = {Journal of Optimization Theory and Applications},
  volume    = {207},
  pages     = {44},
  year      = {2025}
}

@article{shang2023structured,
  author    = {Tong-Tong Shang and Guo-Ji Tang},
  title     = {Structured tensor tuples to polynomial complementarity problems},
  journal   = {J. Global Optim.},
 fjournal = {Journal of Global Optimization},
  volume    = {86},
  number    = {4},
  pages     = {867--883},
  year      = {2023}
}

@article {MR3989294,
    AUTHOR = {Huang, Zheng-Hai and Qi, Liqun},
     TITLE = {Tensor complementarity problems---{P}art {I}: {B}asic theory},
   JOURNAL = {J. Optim. Theory Appl.},
  FJOURNAL = {Journal of Optimization Theory and Applications},
    VOLUME = {183},
      YEAR = {2019},
    NUMBER = {1},
     PAGES = {1--23}
     }

@article {MR4023437,
    AUTHOR = {Huang, Zheng-Hai and Qi, Liqun},
     TITLE = {Tensor complementarity problems---{P}art {III}:
              {A}pplications},
   JOURNAL = {J. Optim. Theory Appl.},
  FJOURNAL = {Journal of Optimization Theory and Applications},
    VOLUME = {183},
      YEAR = {2019},
    NUMBER = {3},
     PAGES = {771--791}
     }

@article {MR3998357,
    AUTHOR = {Qi, Liqun and Huang, Zheng-Hai},
     TITLE = {Tensor complementarity problems---{P}art {II}: {S}olution
              methods},
   JOURNAL = {J. Optim. Theory Appl.},
  FJOURNAL = {Journal of Optimization Theory and Applications},
    VOLUME = {183},
      YEAR = {2019},
    NUMBER = {2},
     PAGES = {365--385}
     }

@article {MR3778365,
    AUTHOR = {Chen, Haibin and Qi, Liqun and Song, Yisheng},
     TITLE = {Column sufficient tensors and tensor complementarity problems},
   JOURNAL = {Front. Math. China},
  FJOURNAL = {Frontiers of Mathematics in China},
    VOLUME = {13},
      YEAR = {2018},
    NUMBER = {2},
     PAGES = {255--276}
     }

@article {MR3513266,
    AUTHOR = {Bai, Xue-Li and Huang, Zheng-Hai and Wang, Yong},
     TITLE = {Global uniqueness and solvability for tensor complementarity problems},
   JOURNAL = {J. Optim. Theory Appl.},
  FJOURNAL = {Journal of Optimization Theory and Applications},
    VOLUME = {170},
      YEAR = {2016},
    NUMBER = {1},
     PAGES = {72--84}
     }

@article {MR4310678,
    AUTHOR = {Palpandi, K. and Sharma, Sonali},
     TITLE = {Tensor complementarity problems with finite solution sets},
   JOURNAL = {J. Optim. Theory Appl.},
  FJOURNAL = {Journal of Optimization Theory and Applications},
    VOLUME = {190},
      YEAR = {2021},
    NUMBER = {3},
     PAGES = {951--965}
}

@book {MR3396730,
    AUTHOR = {Cottle, Richard W. and Pang, Jong-Shi and Stone, Richard E.},
     TITLE = {The Linear Complementarity Problem},
 PUBLISHER = {SIAM,
              Philadelphia, PA},
      YEAR = {2009}
     }

@article {MR3341670,
    AUTHOR = {Song, Yisheng and Qi, Liqun},
     TITLE = {Properties of some classes of structured tensors},
   JOURNAL = {J. Optim. Theory Appl.},
  FJOURNAL = {Journal of Optimization Theory and Applications},
    VOLUME = {165},
      YEAR = {2015},
    NUMBER = {3},
     PAGES = {854--873}
     }

@article {MR3501398,
    AUTHOR = {Song, Yisheng and Qi, Liqun},
     TITLE = {Tensor complementarity problem and semi-positive tensors},
   JOURNAL = {J. Optim. Theory Appl.},
  FJOURNAL = {Journal of Optimization Theory and Applications},
    VOLUME = {169},
      YEAR = {2016},
    NUMBER = {3},
     PAGES = {1069--1078}
     }

@article{li2024strict,
title={Strict feasibility for the polynomial complementarity problem},
author={Li, Xue-Liu and Tang, Guo-Ji},
 JOURNAL = {J. Global Optim.},
  FJOURNAL = {Journal of Global Optimization},
volume={89},
number={1},
pages={57--71},
year={2024}
}

@article{Li2020,
  title={Linear Complementarity Problem over Tensor Spaces},
  author={Li, Xia and Huang, Zheng-Hai },
  year={2020},
  journal   = {Sci. Sin. Math.},
FJOURNAL = {Scientia Sinica Mathematica},
  volume={50},
  number={9},
  pages={1169-1182}
  }

@article{Shang19062025,
author = {Shang, Tong-Tong and Jia, Wen-Sheng },
title = {The {GUS}-property of linear complementarity problems over tensor spaces},
journal = {Optimization},
volume = {0},
number = {0},
pages = {1--24},
year = {2025},
publisher = {Taylor \& Francis},
doi = {10.1080/02331934.2025.2517324},
}

@article{SHANG2024115383,
author = {Shang, Tong-Tong and Tang, Guo-Ji  and Jia, Wen-Sheng},
title = {Solution set bounds for {LCP}s over tensor spaces},
 journal   = {J. Comput. Appl. Math.},
FJOURNAL = {Journal of Computational and Applied Mathematics},
volume = {436},
pages = {115383},
year = {2024},
issn = {0377-0427},
doi = {https://doi.org/10.1016/j.cam.2023.115383},
}

\end{document}